\documentclass[preprint]{elsarticle}

\usepackage{lineno,hyperref}
\modulolinenumbers[5]

\usepackage{amsthm}
\usepackage{amsmath}
\usepackage{amssymb,textgreek}
\usepackage{thmtools, thm-restate}

\usepackage{enumerate}
\usepackage{color,xcolor}
\usepackage{tikz}
\usetikzlibrary{automata}
\usetikzlibrary{arrows,shapes,calc}
\usetikzlibrary{positioning}
\usepackage{soul}

\journal{Stochastic Processes and Application}









\bibliographystyle{elsarticle-num}

\usepackage{amssymb}
\usepackage{amsthm}
\usepackage{mathtools}


\def\gr{{\rm GR}}
\def\sw{{\rm SW}}
\def\aw{{\rm AW}}

\def\bZ{{\mathbb Z}}

\def\bP{{\mathbb P}}
\def\bE{{\mathbb E}}

\def\cX{\mathcal X}

\def\bfeta{\boldsymbol{\eta}}
\def\bfX{\boldsymbol{X}}

\def\bfQ{\boldsymbol{Q}}
\def\bfY{\boldsymbol{Y}}

\newcommand\1{\leavevmode\hbox{\rm \small1\kern-0.35em\normalsize1}}

\def\mExp{\mbox{Exp}}
\def\bR{\mathbb{R}}
\def\N{\mathbb N}

\newtheorem{theorem}{Theorem}
\newtheorem*{terminology}{Terminology}

\newtheorem{remark}{Remark}

\newtheorem{lemma}{Lemma}
\newtheorem{assumption}{Assumption}

\begin{document}

\begin{frontmatter}

\title{Self-Switching Markov Chains: emerging dominance phenomena}

\author[address1]{S. Gallo}
\ead{sandro.gallo@ufscar.br}

\author[address2]{G. Iacobelli}
\ead{giulio@im.ufrj.br}

\author[address2]{G. Ost }
\ead{guilhermeost@im.ufrj.br}
\cortext[mycorrespondingauthor]{Corresponding author}

\author[address3]{D. Y. Takahashi}
\ead{takahashiyd@gmail.com}

\address[address1]{Universidade de São Carlos, São Paulo, Brazil}
\address[address2]{Universidade Federal do Rio de Janeiro, Rio de Janeiro, Brazil}
\address[address3]{Universidade Federal do Rio Grande do Norte, Natal, Brazil}

\begin{abstract}
The law of many dynamical systems changes with the evolution of the system. These changes are often associated with the occurrence of certain events whose time of occurrence depends on the trajectory of the system itself. Dynamics that take longer to change will be observed more frequently and may dominate in the long run (the only ones observed).  This article proposes a Markov chain model, called Self-Switching Markov Chain, in which the emergence of dominant dynamics can be rigorously addressed. We present conditions and scaling under which we observe with probability one only the subset of dominant dynamics. 

\end{abstract}

\begin{keyword}
Markov Chains, Scaling limits, animal behavior, evolution
\end{keyword}

\end{frontmatter}

\tableofcontents

\section{Introduction}
In many dynamical systems observed in Nature, the law of the dynamics changes along with the systems' temporal evolution. These changes are usually associated with the occurrence of certain events. For example, during foraging, an animal searches for food until coming across a predator. If the animal successfully escapes,  it will resume the search blue soon after. As another example, a robot explores an environment until it hits a wall, after which it continues the exploration by following some rule. As a third example, animals exploit a behavior (e.g., sleeping, eating) until an event happens (e.g., body temperature increases, becomes satiated), making them explore another behavior (e.g., wakes up, takes a nap). All these instances have in common that the timing of a change in the dynamics depends on its trajectory. Naturally, trajectories that take longer to satisfy the event (e.g., meeting a predator) will last longer. Therefore, we expect to observe more frequently those dynamics that take longer to change in the long run. The consequence is that, in general, few dynamical states might dominate our observations of natural phenomena. This article proposes a simple Markov chain model that modifies the dynamics depending on its trajectory, similar to the examples described above. We call the model Self-Switching Markov Chains (SSMC). Under general conditions and scaling, we prove that we observe with probability one only a subset of the dynamics, and  we characterize these dominant dynamics. Furthermore, we show that the switching between dynamics exhibits metastability-like properties. 
   
The  phenomenon of dominant dynamics emergence exhibits a mechanism that is akin to natural selection in the theory of evolution.  In evolution, the phenotypes that we consider more adapted are the ones that last more. The difference between natural selection and the dominance phenomenon that we describe is that in evolution, the selection happens in a population ensemble and not at the individual trajectories. The precise formulation of the idea that selection at the individual level can explain the predominance of certain animal behaviors seems to be new. The standard approach for the predominance of a subset of actions in an individual is to assume a learning mechanism \cite{slater1999essentials}. Our result suggests that learning is not the only mechanism that modulates the preference for one dynamics over the other.  A behavior can be dominant simply because it takes longer to change.  

Our model is related to some other popular models in the literature. Hidden Markov Models (HMM) have been widely used to model switching dynamics like animal behavior. HMM is similar to SSMC because the dynamics change depending on the state of the hidden process, but in HMM the change of hidden state is independent of the trajectory of the observed process \cite{cappe2006inference}. Therefore, we do not observe the dominance phenomenon in HMM. Markov decision processes select actions based on its current sate. It also models the predominance of specific behaviors but uses the idea of reward learning mechanism \cite{puterman2014markov}. Finally, Hybrid Switching Diffusions are closely related to SSMC, but there is no description of emergent dominant phenomena for this class of processes to the best of our knowledge \cite{yin2009hybrid}. 

The article is organized as follows. In Section~\ref{sec:model} we introduce the model of Self-Switching Markov Chains (SSMC) and discuss some of its properties. In Section~\ref{sec:Ntoinfty} we study the asymptotics of sequences of SSMC and provide the main results about emergence of dominant behaviors.
Section~\ref{sec:examples} provides a few concrete examples of SSMC based on random walks models.   Finally,  Section~\ref{sec:final} concludes the paper with a brief discussion about  possible extensions of this work.
The proofs of the  technical results are postponed to \ref{app:proofs}, while in  \ref{app:lemmas}  the proofs of couple of auxiliary lemmas are given. 



\section{Model and main results} 

\paragraph{General notation.}
Let  $\N:=\{0,1,\ldots\}$ be the set of natural numbers. Henceforth,  we shall use the following notation: 
$\mathcal X$ denotes a finite set (describing the possible configuration of a system) and $\Xi$  a compact metric space (set of possible states). 

\subsection{Self-Switching Markov Chains}\label{sec:model} A \emph{Self-Switching Markov Chain} (SSMC) is a Markov chain  $\{(X_i, \eta_i)\}_{i\in \mathbb{N}}$ taking values on $\mathcal X\times\Xi$ which   depends on four parameters:
\begin{itemize}
    \item $\mu$, a probability measure on $\Xi$ (\emph{state distribution});
    \item $\bfQ:=(Q^{(\theta)})_{\theta\in\Xi}$,  a family  of $\mathcal X\times\mathcal X$ stochastic matrices (\emph{dynamic in state $\theta$});
    \item $x_0\in \mathcal X$ (\emph{origin});
    \item $T\subset\mathcal X$,  a set which does not contain the origin (\emph{target set}).
\end{itemize}
%
The dynamics of a SSMC  is defined as follows:  
 $(X_0,\eta_0)\sim\delta_{x_0}\otimes\mu$,  and for all $i\ge1$, all $x_{<i}:=(x_0,\ldots,x_{i-1})\in\mathcal{X}^{i}$ and all $\theta_{<i}:=(\theta_0,\ldots,\theta_{i-1})\in\Xi^i$
\begin{align*}
(X_{i},\eta_i)|X_{<i}=x_{<i},\eta_{<i}=\theta_{<i}\sim \begin{cases}
\delta_{x_0}\otimes\mu,  & \text{ if  $x_{i-1}\in T$}\;,
    \\
Q^{(\theta_{i-1})}(x_{i-1},\cdot)\otimes\delta_{\theta_{i-1}}, & \text{otherwise}\;.
    \end{cases}
\end{align*}
We denote by $\mathbb{P}_{x_0,\mu}$ the law of $\{(X_i, \eta_i)\}_{i\in \mathbb{N}}$  when $(X_0, \eta_0)\sim \delta_{x_0}\otimes\mu$, and by $\mathbb{E}_{x_0,\mu}$ the corresponding expectation, while the expectation with respect to $\mu$ is denoted by $\mathbf{E}_\mu$. 
The first coordinate process  $\bfX=\{X_i\}_{i\in \mathbb{N}}$  may be seen as a \emph{location} process, while the second coordinate  $\bfeta=\{\eta_i\}_{i\in \mathbb{N}}$  as a  process determining the \emph{state} of the process $\bfX$. The location and the state process are coupled;  on one hand, at time $i$, the location $X_i$ is updated through a transition matrix which is  parametrized by $\eta_i$, while the   process $\bfeta$ is  updated as soon as  the  location process $\bfX$ hits the target set  $T\subset\mathcal X$. After each such event, the location process restarts from the origin  $x_0$ and a new state is sampled according to $\mu$ and independently of everything else. In other words, the random times when the location process hits the target set are renewal times for the location process.

\paragraph{Example}  As an example of a SSMC consider a random walk $\bfX$ on a weighted finite (connected) graph $G=(V,E, w)$, where $w:E\to [a,b]$, with $b>a>0$,  is a function assigning to every edge $e \in E$ a weight $w(e) \in [a,b]$.
In this case  $\mathcal{X}=V$, $\Xi=[a,b]^E$ and $Q^{(\theta)}$, with $\theta \in \Xi$,  is the transition matrix of a random walk on $G$ with edge  weights $w=\theta$.
The edge weights correspond to a particular state and $\eta_i$ denotes the weights used at time $i$. 
Given an origin $x_0 \in V$, and a set of target vertices $T\subset V$, the dynamics of this SSMC consists in drawing the edge weights according to a distribution $\mu$ and  letting  the random walk move on the corresponding  weighted graph, starting from $x_0$. As soon as the random walk hits the target set $T$, a new weight function is sampled according to $\mu$ and the random walk restarts from the origin. 

\vspace{0.3cm}

Throughout the paper, we assume that $Q^{(\theta)}$ is irreducible for any $\theta\in\Xi$. This guarantees that the process $\bfX$  will visit $T$ infinitely often, since $\mathcal X$ is finite. 
However, this does not automatically grant positive recurrence of $\bfX$, due to the coupled dynamics with the state process $\bfeta$. 

To better understand the coupled dynamics, let us define the following sequence of stopping times: $S_0:=0$ and for any $i\ge1$
\[
S_i:=\inf\{n>S_{i-1}:X_n\in T\}\;.
\]
Note that the distribution of the stopping time $S_i$ depends on the whole past of the chain, and in particular depends on the processes $\bfeta$ up to time $S_{i-1}+1$.  However,  conditioned on $S_{i-1}$, the distribution of $S_i$ depends on $\bfeta$ only through $\eta_{S_{i-1} +1}$. By the definition of the model, $\eta_{S_{i-1} +1}$ is distributed according to $\mu$ and is independent of everything else. Thus, if for every $i\geq 1$,  we define 
the \emph{inter-switching} times $\tau_i:=S_{i}-S_{i-1}$, and denote by 
$\{\Theta_i\}_{i\geq 1}$  an i.i.d. sequence of random variables with distribution $\mu$ (independent of everything else), we have that $\tau_i(\eta_{S_{i-1} +1}) \sim \tau_i(\Theta_i)$, for every $i\geq 1$.
\begin{remark}\label{rem:iid}
The  random variables $\{\tau_i(\Theta_i)\}_{i\geq 1}$ are i.i.d. However, 
$\mathbb{P}_{x_0,\mu}(\tau_i(\Theta_i)=\cdot | \Theta_i=\theta)$ strongly depends on $\theta \in \Xi$, i.e., different states give rise to different inter-switching times. 
\end{remark}

Given the i.i.d. nature of the sequence $\{\tau_i(\Theta_i)\}_{i\geq 1}$,  we shall denote by $\tau_1(\Theta_1)$ anyone of them. We shall also use the notation $\tau(\Theta)$ or simply $\tau$, when there is no risk of confusion. 
Throughout the paper we shall {work under the following assumption}.
\begin{assumption}
\label{Ass:1}
The function 
\begin{equation}\label{eq:def_m}
\Xi \ni \theta\mapsto m(\theta):=\bE_{x_0, \mu}\left[\tau_1(\Theta_1)|\Theta_1=\theta\right]\;,
\end{equation} is continuous. 
\end{assumption}

Assumption~\ref{Ass:1} guarantees that the process $\bfX$ is positive recurrent, i.e., the   time elapsed between consecutive visits to $T$ has finite expectation. In fact,  since $Q^{(\theta)}$ is irreducible for every $\theta \in \Xi$ and $\mathcal X$ is finite, we know that $m(\theta)<\infty$. Moreover, since  $\Xi$ is compact, the function $m$ is uniformly bounded and thus
\begin{equation}
\label{def:normalization_contant}
\bE_{x_0,\mu}\left[\tau_1(\Theta_1)\right]=\mathbf{E}_{\mu}[m]=\int_{\Xi}m(\theta)\mu(d\theta)<\infty\;.
\end{equation}

\paragraph{Occupation measure of the state process}

For each integer $n\geq 1$ and Borelian set $A\subseteq \Xi$, let us denote by $\hat{p}_{n}(A)$ the \emph{empirical occupation measure} of $A$ for the process $\bfeta$, defined as 
\begin{equation}
\label{def:P_hat_N_n}
\hat{p}_{n}(A):=\frac{1}{n}\displaystyle \sum_{k=1}^n \1_{(\eta_k\in A)}\;.
\end{equation}

The following proposition provides the limiting measure, as time $n$ goes to infinity, of the  empirical occupation measure {\color{blue} (Proof in \ref{sec:proofs_as_distribution}).}   

\begin{restatable}{proposition}{propLLN}
\label{Prop:P_hat_N_n_convto_P_hat_N}
Let $m:\Xi \to \mathbb{R}$ be the function defined in \eqref{eq:def_m}. Under Assumption \ref{Ass:1}, for every Borel measurable $A\subseteq \Xi$
it holds that  
\begin{equation*}
\hat p_{n}(A)\underset{n\to \infty}{\longrightarrow}\; 
\widehat{P}(A):=
\frac{\mathbf{E}_\mu\left[\1_A m\right] }{\mathbf{E}_\mu[m]}\;, \quad \text{$\bP_{x_0, \mu}$-almost surely}\;.
\end{equation*}
\end{restatable}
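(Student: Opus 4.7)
The plan is to exploit the renewal/block structure of the state process $\bfeta$. By construction, $\eta$ is piecewise constant on the blocks determined by the stopping times $S_i$: between times $S_{k-1}+1$ and $S_k$ the chain $\bfeta$ is constantly equal to $\eta_{S_{k-1}+1}$, which (using the i.i.d.\ coupling already established in the excerpt) can be identified in distribution with $\Theta_k$, where $\{\Theta_k\}_{k\ge 1}$ is i.i.d.\ $\mu$. On this block, the indicator $\1_{\eta_i\in A}$ equals $\1_{\Theta_k\in A}$, and it is held for exactly $\tau_k=S_k-S_{k-1}$ consecutive time steps. So the sum defining $\hat p_n(A)$ is essentially a cumulative sum indexed by completed blocks, plus a terminal partial-block correction.

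Concretely, let $N(n):=\max\{k\ge 0:S_k\le n\}$ count the number of completed blocks by time $n$. Setting aside the initial excursion (which contributes $O(1)$ terms and is negligible after dividing by $n$), one has the sandwich
\begin{equation*}
\sum_{k=1}^{N(n)}\tau_k\,\1_{\Theta_k\in A}\;\le\;\sum_{i=1}^n\1_{\eta_i\in A}\;\le\;\sum_{k=1}^{N(n)+1}\tau_k\,\1_{\Theta_k\in A}+C,
\end{equation*}
for some $\bP_{x_0,\mu}$-a.s.\ finite constant $C$ absorbing the initial transient. Both extremes differ from the middle quantity by at most one full block $\tau_{N(n)+1}$ plus a constant.

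The next step is two applications of the strong law of large numbers to the i.i.d.\ sequence $\{(\tau_k,\Theta_k)\}_{k\ge 1}$ of Remark \ref{rem:iid}. First, by the SLLN and \eqref{def:normalization_contant} (which makes $\tau_1$ integrable),
\begin{equation*}
\frac{1}{m}\sum_{k=1}^{m}\tau_k\,\1_{\Theta_k\in A}\;\xrightarrow[m\to\infty]{}\;\bE_{x_0,\mu}\!\left[\tau_1(\Theta_1)\1_{\Theta_1\in A}\right]=\mathbf{E}_\mu[\1_A\,m],\qquad\text{a.s.}
\end{equation*}
Second, the classical renewal SLLN gives $N(n)/n\to 1/\mathbf{E}_\mu[m]$ almost surely. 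Composing (and using $N(n)\to\infty$ a.s.) yields
\begin{equation*}
\frac{1}{n}\sum_{k=1}^{N(n)}\tau_k\,\1_{\Theta_k\in A}\;=\;\frac{N(n)}{n}\cdot\frac{1}{N(n)}\sum_{k=1}^{N(n)}\tau_k\,\1_{\Theta_k\in A}\;\xrightarrow[n\to\infty]{}\;\frac{\mathbf{E}_\mu[\1_A m]}{\mathbf{E}_\mu[m]},
\end{equation*}
and the analogous upper-bound quantity converges to the same limit.

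The only genuinely delicate point is controlling the last, possibly incomplete, block: one must verify that $\tau_{N(n)+1}/n\to 0$ almost surely so that the sandwich squeezes. This is standard for i.i.d.\ renewals with finite mean — for instance, since $\tau_{N(n)+1}\le\max_{k\le N(n)+1}\tau_k$ and an integrable i.i.d.\ sequence satisfies $\max_{k\le m}\tau_k=o(m)$ a.s.\ (an easy Borel--Cantelli argument using $\sum_m\bP(\tau_1>\varepsilon m)<\infty$, implied by integrability). With that observation the squeeze closes and the stated almost-sure limit $\widehat P(A)$ follows.
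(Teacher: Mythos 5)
Your proposal is correct and follows essentially the same route as the paper's own proof: decompose $\sum_{i\le n}\1_{(\eta_i\in A)}$ along the renewal blocks determined by the $S_k$, sandwich it between sums of $\tau_k\1_{(\Theta_k\in A)}$ over completed blocks, apply the strong law of large numbers to the i.i.d.\ pairs $(\tau_k,\Theta_k)$, and show the final partial block contributes $\tau_{M_n+1}/n\to 0$ almost surely. The only cosmetic difference is that you normalize by $n$ via the renewal SLLN $N(n)/n\to 1/\mathbf{E}_\mu[m]$ while the paper normalizes by $\sum_{i\le M_n}\tau_i$ directly, and you supply the Borel--Cantelli justification for $\max_{k\le m}\tau_k=o(m)$ that the paper leaves implicit.
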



Note that the limiting probability measure $\widehat{P}$ on $\Xi$ is absolutely continuous with respect to the probability measure $\mu$ {since}
$$
\widehat{P}(d\theta)=\frac{m(\theta)}{\mathbf{E}_\mu[m]}\mu(d\theta)\;.
$$

\subsection{Sequence of Self-switching Markov chains}\label{sec:Ntoinfty}

Recall that a SSMC  $\bfY=\{(X_i, \eta_i)\}_{i\in \mathbb{N}}$ is defined by the pair $(\Xi,\mu)$, and  the tuple  $(\bfQ, \mathcal X,x_0,T)$. 
In this section, we will consider a sequence $\bfY_N,N\ge1$ of SSMCs  using a sequence of tuples  $(\bfQ_N, \mathcal X_N,x_0, T_N),N\ge1$ in which $\mathcal X_N,N\ge1$ is  an increasing sequence of finite sets, $x_0 \in \mathcal{X}_1$,  $T_N\subset\mathcal X_N$ are subsets which do not contain $x_0$, and finally, $\bfQ_N,N\ge1$ are irreducible stochastic matrices on $\mathcal X_N$. With the exception of $\Xi$, $\mu$ and $x_0$, all the quantities which were introduced in Section \ref{sec:model} are affected by the dependence on $N$, and for this reason, they will be indexed by $N$. In particular, $\widehat P$, $\{\tau_i\}_{i\geq 1}$,   $\tau$ and $m(\theta)$ will now be denoted respectively as $\widehat P_N$, $\{\tau_{i,N}\}_{i\geq 1}$,  $\tau_N$  and $m_N(\theta)$ for any $N\ge1$.
The sets $T_N$ and the parameter $N\ge1$ control the size of the configuration space of the location process. 


\subsubsection{Emerging dominance}

In this section, we  investigate  the possible limits of the sequence of probability measures $\{\widehat{P}_N\}_{N\geq 1}$  where we recall that, for any $N\ge1$, $\widehat{P}_N(A):=\frac{\mathbf{E}_\mu\left[\1_A m_N\right] }{\mathbf{E}_\mu[m_N]}$. 
We will present general conditions   under which 
the  sequence $\{\widehat{P}_N\}_{N\geq 1}$ converges weakly to a mixture of  Dirac deltas. 

\medskip 
In what follows, we write ${\rm supp}(\mu)$ to indicate the support of the probability measure $\mu$. 

\begin{terminology}[Emerging dominance]

 Given $\{\theta^*_1,\ldots, \theta^*_K\} \subsetneq {\rm supp}(\mu)$ (proper inclusion), we say that there  is ``\emph{emerging dominance at $\{\theta^*_1,\ldots, \theta^*_K\}$ with positive weights $\{w_i,\ldots,w_K\}$}" if the sequence of probability measures $\{\widehat{P}_N\}_{N\geq 1}$ converges weakly, when $N$ diverges, to the mixture of Dirac deltas $\sum_{i=1}^Kw_i\delta_{\theta^*_i}$, and $\sum_{i=1}^Kw_i=1$.

\end{terminology}

We emphasize that emerging dominance requires that the sequence $\{\widehat{P}_N\}_{N\geq 1}$ converges weakly to a mixture of Dirac deltas supported on a {\em proper subset} of ${\rm supp(\mu)}$. 
In light of that, note that, if $\limsup_{N\to \infty}m_N(\theta)\leq L$, $\forall \theta \in \Xi$, then $\liminf_{N\to \infty}\widehat{P}_N(A):=\liminf_{N\to \infty}\frac{\mathbf{E}_\mu\left[\1_A m_N\right] }{\mathbf{E}_\mu[m_N]}\geq \frac{\mu(A)}{L}$. Hence,  the support of any weak limit of $\{\widehat{P}_N\}_{N\geq 1}$ coincides with ${\rm supp(\mu)}$, and therefore there is no emerging dominance. Thus, henceforth, we consider only situations in which $m_N(\theta)$ diverges, as $N$ tends to infinity.

\medskip 

 The simplest situation is when ${\rm supp}(\mu)$ is a finite set. In this case we have the following theorem  (Proof in \ref{sec:proofs_as_distribution}). 
%

\begin{restatable}{theorem}{thmFinite}
\label{theo:discrete}
Let $(\Xi,d)$ be a compact metric space and $\mu$ a discrete  probability measure on $\Xi$ with finite support. 
Assume that there exists $\{\theta^*_1,\ldots, \theta^*_K\}\subsetneq  {{\rm supp}(\mu)}$
satisfying  
\begin{enumerate}[(1)]
    \item there exists  $1\leq i\leq K$ such that for all $\theta \in {\rm supp}(\mu)\setminus\{\theta^*_1,\ldots, \theta^*_K\}$, $$\frac{m_N(\theta)}{m_N(\theta^*_i)} \underset{N\to \infty}{\longrightarrow} 0\;,$$
 \item  for each $1\leq i,j\leq K$ with $i\neq j$, it holds 
 $$
 \frac{m_N(\theta^*_j)}{m_N(\theta^*_i)} \underset{N\to \infty}{\longrightarrow} d_{i,j}>0\;.
 $$
\end{enumerate}
Then, we have emerging dominance at $\{\theta^*_1,\ldots, \theta^*_K\}$ with weights
\[
w_{i}= \frac{1}{1 + \sum_{\substack{j=1\\ j \neq i}}^K \frac{\mu(\theta^*_j)}{\mu(\theta^*_i)}d_{i,j}}\,,\,\,i=1,\ldots,K.
\]
\end{restatable}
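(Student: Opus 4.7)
The plan is to reduce the weak convergence to a pointwise convergence statement, exploit the finiteness of ${\rm supp}(\mu)$, and then compute the limiting weights by carefully normalising by $m_N(\theta_i^*)$.

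Since ${\rm supp}(\mu)$ is finite, $\widehat{P}_N$ is a discrete probability measure supported on this fixed finite set, and likewise for the candidate limit. Hence weak convergence is equivalent to showing, for every $\theta \in {\rm supp}(\mu)$,
\[
\widehat{P}_N(\{\theta\}) \;\longrightarrow\; \begin{cases} w_i & \text{if } \theta = \theta^*_i \text{ for some } 1\le i\le K,\\ 0 & \text{if } \theta \in {\rm supp}(\mu)\setminus\{\theta^*_1,\dots,\theta^*_K\}.\end{cases}
\]
Write ${\rm supp}(\mu) = \{\theta^*_1,\ldots,\theta^*_K\}\cup\{\vartheta_1,\ldots,\vartheta_L\}$ with the $\vartheta_\ell$'s being the non-dominant atoms. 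By the formula for $\widehat{P}_N$ stated right before the terminology box,
\[
\widehat{P}_N(\{\theta\}) \;=\; \frac{\mu(\theta)\,m_N(\theta)}{\sum_{j=1}^K \mu(\theta^*_j)\, m_N(\theta^*_j) + \sum_{\ell=1}^L \mu(\vartheta_\ell)\, m_N(\vartheta_\ell)}.
\]

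The first step is to upgrade hypothesis (1): it only asserts $m_N(\vartheta_\ell)/m_N(\theta^*_i)\to 0$ for one particular index $i$, but combining with (2) I get, for every $1\le i'\le K$,
\[
\frac{m_N(\vartheta_\ell)}{m_N(\theta^*_{i'})} \;=\; \frac{m_N(\vartheta_\ell)}{m_N(\theta^*_i)}\cdot\frac{m_N(\theta^*_i)}{m_N(\theta^*_{i'})} \;\longrightarrow\; 0\cdot d_{i',i}^{-1} \;=\; 0,
\]
since $d_{i',i}>0$ (and note that (2) immediately gives $d_{i,i}=1$ and $d_{i',i}\,d_{i,i'}=1$, so $d_{i',i}>0$). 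Thus asymptotically the $\vartheta_\ell$ terms are negligible compared to every $\theta^*_{i}$ term.

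Next, for $\theta = \theta^*_i$, divide numerator and denominator by $m_N(\theta^*_i)$: the numerator becomes $\mu(\theta^*_i)$, the $K$-sum tends to $\sum_{j=1}^K \mu(\theta^*_j)\, d_{i,j}$, and the $L$-sum vanishes by the above. This yields
\[
\widehat{P}_N(\{\theta^*_i\}) \longrightarrow \frac{\mu(\theta^*_i)}{\sum_{j=1}^K \mu(\theta^*_j)\, d_{i,j}} \;=\; \frac{1}{1+\sum_{j\neq i}\frac{\mu(\theta^*_j)}{\mu(\theta^*_i)}\,d_{i,j}} \;=\; w_i,
\]
using $d_{i,i}=1$. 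For $\theta=\vartheta_\ell$, the same normalisation makes the numerator vanish while the denominator remains bounded away from $0$, so $\widehat{P}_N(\{\vartheta_\ell\})\to 0$.

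Finally, I should check $\sum_i w_i = 1$, which is not visible from the $w_i$ formula alone. The key observation is the cocycle identity $d_{i,j}\,d_{j,k}=d_{i,k}$, obtained by taking limits of the trivial identity $\frac{m_N(\theta^*_j)}{m_N(\theta^*_i)}\cdot\frac{m_N(\theta^*_k)}{m_N(\theta^*_j)}=\frac{m_N(\theta^*_k)}{m_N(\theta^*_i)}$. This forces $d_{i,j}=c_j/c_i$ for some positive constants $c_1,\dots,c_K$ (one may take $c_j:=d_{1,j}$). Then $w_i = \mu(\theta^*_i)\,c_i/\sum_{j}\mu(\theta^*_j)\,c_j$, which manifestly sums to $1$. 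The only real technical point is this bookkeeping with the $d_{i,j}$'s; the rest is elementary manipulation of the finite-support formula.
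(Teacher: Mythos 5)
Your proof is correct, but it takes a genuinely different route from the paper. The paper first establishes a general ``Key Lemma'' (Lemma~\ref{th:main}) for arbitrary compact $\Xi$, phrased in terms of two conditions (A) and (B) on the integrals $I_N[\,\cdot\,]$ over small balls around the candidate dominant states, and then proves Theorem~\ref{theo:discrete} by checking that hypotheses (1) and (2) imply (A) and (B) when $\mu$ has finite support (the balls eventually isolate single atoms, so the ratios of ball-integrals reduce to the ratios $\mu(\theta^*_j)m_N(\theta^*_j)/\mu(\theta^*_i)m_N(\theta^*_i)$). You instead bypass the Key Lemma entirely: you observe that for measures supported on a fixed finite set, weak convergence reduces to convergence of the individual atom masses, and then compute each limit by normalising by $m_N(\theta^*_i)$, after first upgrading hypothesis (1) to hold for every dominant index via hypothesis (2) --- exactly the same upgrade the paper performs implicitly inside condition (A). Your approach is shorter and more elementary for this finite case; the paper's approach buys reusability, since the same Key Lemma also drives Theorems~\ref{theo:unique} and~\ref{theo:twodominance}. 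Two minor remarks: the limit of $m_N(\theta^*_i)/m_N(\theta^*_{i'})$ is $d_{i',i}$ rather than $d_{i',i}^{-1}$ as you wrote, but since both are finite and positive the product with $0$ is $0$ either way; and the normalisation $\sum_i w_i = 1$ follows even more directly from passing to the limit in the identity $\sum_{\theta\in{\rm supp}(\mu)}\widehat{P}_N(\{\theta\})=1$ (a finite sum), so the cocycle bookkeeping, while correct, is not needed.
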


 {This result solves the question in the finite case in a rather satisfactory way. Indeed, the conditions are quite natural. Condition (1) states that the time spent in any state is asymptotically negligible compared to the $K$ dominating states, while on the other hand Condition (2) asks that the asymptotic time spent in the dominating states be comparable. Condition (1) cannot be relaxed in general as will be explained in Remark \ref{rem:opt}. Condition (2), as far as the convergence holds, cannot be relaxed in general, since if for some pair $(\theta^*_i,\theta^*_j)$ the ratio diverges or vanishes, this means that one the two is asymptotically negligible and should be removed from the set of candidates to dominant states in first place. }
 
 \medskip

The second result relies on assuming some regularity on the sequence of  functions $h_N(\theta)=N^{-1}\log(m_N(\theta))$, and it  provides sufficient conditions for the emergence of a {\em unique} dominant state {\color{blue} (Proof in \ref{sec:proofs_as_distribution}).}   

\begin{restatable}{theorem}{thmUnique}
\label{theo:unique}
Assume that the function $\Xi\ni \theta\mapsto h_N(\theta)=N^{-1}\log(m_N(\theta))$ converges uniformly to a function $h$ as $N\to\infty$. If there exists a unique parameter $\theta^*\in {\rm supp}(\mu)$  such that    
$\arg\max_{\theta\in\Xi} h(\theta)=\theta^*$, and  $h(\theta^*)>0$,
then we have emerging dominance at $\{\theta^*\}$, with weight $1$.
\end{restatable}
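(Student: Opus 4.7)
The plan is to prove the weak convergence $\widehat{P}_N \Rightarrow \delta_{\theta^*}$ via the Portmanteau theorem: it suffices to show that $\widehat{P}_N(F) \to 0$ for every closed set $F \subset \Xi$ with $\theta^* \notin F$. Recalling that $\widehat{P}_N(F) = \mathbf{E}_\mu[\1_F m_N] / \mathbf{E}_\mu[m_N]$, I would establish an exponential upper bound on the numerator and a matching exponential lower bound on the denominator, both driven by the uniform convergence of $h_N = N^{-1}\log m_N$ to $h$.

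For the upper bound, first observe that each $h_N$ is continuous by Assumption~\ref{Ass:1} (since $m_N \geq 1$ is continuous and bounded), so the uniform limit $h$ is continuous on the compact space $\Xi$. The uniqueness of the maximizer then implies, for any closed $F$ not containing $\theta^*$, that the gap $\alpha := h(\theta^*) - \sup_{\theta\in F} h(\theta)$ is strictly positive. Fixing $\epsilon \in (0,\alpha/3)$, uniform convergence yields, for $N$ large, $m_N(\theta) \le \exp(N(h(\theta)+\epsilon))$ on all of $\Xi$; restricting to $F$ gives $\mathbf{E}_\mu[\1_F m_N] \le \exp(N(h(\theta^*)-\alpha+\epsilon))$.

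For the lower bound, since $\theta^* \in {\rm supp}(\mu)$, I would use the continuity of $h$ to pick an open neighborhood $V$ of $\theta^*$ on which $h \ge h(\theta^*)-\epsilon$; by the definition of support, $\mu(V)>0$. Uniform convergence then yields $m_N(\theta) \ge \exp(N(h(\theta^*)-2\epsilon))$ on $V$ for $N$ large, hence $\mathbf{E}_\mu[m_N] \ge \mu(V)\exp(N(h(\theta^*)-2\epsilon))$. Combining the two estimates,
\[
\widehat{P}_N(F) \le \mu(V)^{-1}\exp\bigl(-N(\alpha-3\epsilon)\bigr),
\]
which vanishes as $N\to\infty$ by the choice of $\epsilon$.

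The main subtlety is that $\mu$ need not charge $\theta^*$ as an atom, so the denominator estimate cannot simply isolate $m_N(\theta^*)$; this is precisely why the argument must work with an open neighborhood $V$ and invoke $\theta^* \in {\rm supp}(\mu)$ rather than $\mu(\{\theta^*\})>0$. The hypothesis $h(\theta^*)>0$ is not actually needed for the exponential-gap argument itself (the factor $\exp(Nh(\theta^*))$ cancels in the ratio), but it ensures we are in the regime where $m_N$ genuinely diverges, consistent with the discussion preceding the theorem and with the informal notion of ``dominance''.
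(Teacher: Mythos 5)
Your proof is correct and uses essentially the same argument as the paper: the identical exponential upper bound on the mass away from $\theta^*$ and lower bound on a neighborhood $V$ with $\mu(V)>0$, both from uniform convergence of $h_N$ and continuity of the limit $h$. The only cosmetic difference is that you conclude via the Portmanteau closed-set criterion, whereas the paper feeds the same ratio estimate $I_N[V^c]/I_N[V]\to 0$ into its Key Lemma (Lemma~\ref{th:main} with $K=1$); the two wrappers are equivalent here.
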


\begin{remark}\label{rem:laplace1}
Theorem~\ref{theo:unique}  may also be formulated in a more general setting  {which relates the problem to the so-called Laplace's Principle}.

Let $\{g_N\}_{N\geq 1}$ be a sequence of functions defined on a compact metric space $(\Xi,d)$ which converges uniformly to a function $g$. Let $\mu$ be a probability measure on $\Xi$. If  $\theta^*=\arg\max_{\theta\in \Xi}g(\theta)$ (that is, $g$ has a unique global maximum at $\theta^*$), $g$ is continuous at $\theta
^*$,
and $\theta^*\in {\rm supp}(\mu)$, then the following generalized Laplace's Principle holds: for any continuous function $f:\Xi\to\bR$
$$
\frac{\int_{\Xi}f(\theta)e^{Ng_N(\theta)}\mu(d\theta)}{\int_{\Xi}e^{Ng_N(\theta)}\mu(d\theta)}\;\underset{N\to\infty}{\longrightarrow}\; f(\theta^*)\;. 
$$

\end{remark}

  {One advantage of Theorem \ref{theo:unique} is that it dispenses any condition on the probability measure $\mu$ and its support. The drawback is that it gives emerging dominance only for  a single state. However}, from the analysis of the  sequence of functions $h_N(\theta)=N^{-1}\log(m_N(\theta))$,  {and assuming that $\mu$ is absolutely continuous w.r.t. the Lebesgue measure} it is also possible to infer the emergence of more than one dominant state. Our   next theorem provides sufficient conditions under which  $\{\widehat{P}_N\}_{N\geq 1}$  converges to a combination of two Dirac deltas in the one-dimensional case.

 
\begin{restatable}{theorem}{thmdouble}\label{theo:twodominance}
Let $\Xi=[a,b]\subset\bR$ with  $a<b$  and assume that the probability measure $\mu$ on $\Xi$ has continuous density  $g_{\mu}$ with respect to the Lebesgue measure. Assume moreover that
 \begin{enumerate}
     \item the sequence $h_N$ converges uniformly to a function $h\in C^1([a,b])$,
     \item the function $h$ satisfies $\arg\max_{\theta\in\Xi} h(\theta)\cap \,{\rm supp}(\mu)=\{\theta^*_1,\theta^*_2\}=\{a,b\}$ with $h(a)=h(b)>0$, $h'(a)<0$ and $h'(b)>0$\footnote{With an abuse of notation, we are denoting by $h'(a)$ the right derivative at $a$, and  by $h'(b)$ the left derivative at $b$. },
     \item for each $1\leq i\leq 2$, it holds that  
\[N\left(h_N(\theta^*_i)-h(\theta^*_i)\right)\to d_i\;, \text{ with } |d_i|<\infty\;,
\]
and
\[
 \sup_{\theta\in [\theta^*_i-\delta,\theta^*_i+\delta]\cap [a,b]}|h'_N(\theta)-h'(\theta)|\to0\;,
\]
{for all $\delta>0$ sufficiently small}.
     \end{enumerate}
Then there is emerging dominance at $\{\theta_1^*,\theta_2^*\}$ with weights  $\{w_1,w_2\}$ given  by
\[
w_i=\frac{(-1)^{i}e^{d_i}g_{\mu}(\theta^*_i)/h'(\theta^*_i)}{e^{d_2}g_{\mu}(\theta^*_2)/h'(\theta^*_2)-e^{d_1}g_{\mu}(\theta^*_1)/h'(\theta^*_1)}\;, \quad \ i=1,2\;.
\]
\end{restatable}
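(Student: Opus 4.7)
The plan is a boundary Laplace-type analysis. Since $\mu$ has continuous density $g_\mu$, the measure $\widehat{P}_N$ has Lebesgue density $\theta\mapsto e^{N h_N(\theta)}g_\mu(\theta)/\mathbf{E}_\mu[m_N]$, and the required weak convergence is equivalent to showing
\[
\frac{\int_a^b f(\theta)\,e^{N h_N(\theta)} g_\mu(\theta)\,d\theta}{\int_a^b e^{N h_N(\theta)} g_\mu(\theta)\,d\theta}\;\underset{N\to\infty}{\longrightarrow}\; w_1 f(a)+w_2 f(b)
\]
for every continuous $f:[a,b]\to\bR$. Fix $\delta>0$ small enough that condition~(3) supplies uniform convergence of $h_N'$ to $h'$ on $[a,a+\delta]\cup[b-\delta,b]$, and split every integral in the display above into three pieces supported on $[a,a+\delta]$, $[a+\delta,b-\delta]$, and $[b-\delta,b]$.

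For the bulk piece on $[a+\delta,b-\delta]$, the assumption $\arg\max_\Xi h\cap {\rm supp}(\mu)=\{a,b\}$ together with compactness of $K:=[a+\delta,b-\delta]\cap {\rm supp}(\mu)$ and continuity of $h$ yields some $\gamma>0$ with $h|_K\le h(a)-\gamma$. Since $g_\mu=0$ Lebesgue-a.e.~outside ${\rm supp}(\mu)$, uniform convergence of $h_N$ to $h$ then gives a bound $C\|f\|_\infty\,e^{N(h(a)-\gamma/2)}$ for the bulk piece, which is exponentially smaller than the endpoint contributions analyzed below.

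For the left endpoint, change variables $\theta=a+s/N$ and write
\[
N h_N(a+s/N)-Nh(a)=N(h_N(a)-h(a))+\int_0^s h_N'(a+u/N)\,du.
\]
By condition~(3), the first term on the right tends to $d_1$ and the second tends to $sh'(a)$ uniformly on compact sets of $s$. Since $h'(a)<0$, eventually $h_N'(a+u/N)\le h'(a)/2$, providing an integrable dominating function on $[0,\infty)$. Dominated convergence, together with continuity of $f$ and $g_\mu$ at $a$, then yields
\[
N\,e^{-N h(a)}\int_a^{a+\delta} f(\theta)\,e^{N h_N(\theta)}g_\mu(\theta)\,d\theta\;\longrightarrow\;\frac{e^{d_1}g_\mu(a)f(a)}{-h'(a)}.
\]
A parallel argument near $b$ (with $\theta=b-s/N$ and $h'(b)>0$) gives the analogous limit with $a,d_1,h'(a)$ replaced by $b,d_2,h'(b)$. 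Canceling the common factor $e^{N h(a)}=e^{N h(b)}$ between numerator and denominator of the target ratio and collecting the two endpoint limits then yields precisely $w_1 f(a)+w_2 f(b)$ with the weights as stated.

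The main obstacle is simultaneously controlling, inside each endpoint expansion, three perturbations at compatible scales: the $O(1/N)$ shift in level encoded by $d_i$, the uniform perturbation $h_N'-h'$, and the variation of $g_\mu$ on the $1/N$-scale near the endpoint. Each is handled by one ingredient of hypothesis~(3) or the continuity of $g_\mu$, but packaging them into a single dominated-convergence argument after rescaling to $[0,\infty)$ requires care, especially in producing a uniform-in-$N$ integrable envelope. A secondary subtlety is verifying that the bulk piece is negligible when ${\rm supp}(\mu)$ is a strict subset of $[a,b]$, which the compactness argument above handles via the vanishing of $g_\mu$ outside ${\rm supp}(\mu)$.
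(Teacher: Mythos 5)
Your proposal is correct, and it reaches the stated weights by a route that is organized differently from the paper's. The paper factors the argument through its Key Lemma: it verifies Condition (A) by the same comparison used for Theorem~\ref{theo:unique}, and Condition (B) by a first-order Taylor expansion of $h_N$ at each endpoint, sandwiching $N(h_N(\theta)-h(\theta^*_i))$ between $d_i\mp\varepsilon+N(h'(\theta^*_i)\mp2\varepsilon)(\theta-\theta^*_i)$, pulling $g_\mu$ out by the mean value theorem, changing variables $x=N(\theta-\theta^*_i)$, and then collapsing the $\limsup$/$\liminf$ bounds by letting $\delta\to0$ and $\varepsilon\to0$; it never computes the individual asymptotics of the integrals, only ratios. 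You instead bypass the Key Lemma, test $\widehat P_N$ directly against a continuous $f$, and compute the exact boundary Laplace asymptotics $Ne^{-Nh(a)}\int_a^{a+\delta}f\,e^{Nh_N}g_\mu\,d\theta\to e^{d_1}g_\mu(a)f(a)/(-h'(a))$ via the rescaling $\theta=a+s/N$, the identity $N\bigl(h_N(a+s/N)-h_N(a)\bigr)=\int_0^s h_N'(a+u/N)\,du$ (legitimate, since hypothesis (3) presupposes $h_N$ differentiable near the endpoints, as does the paper's own proof), and dominated convergence with envelope $e^{sh'(a)/2}$; your treatment of the bulk piece via compactness of $[a+\delta,b-\delta]\cap{\rm supp}(\mu)$ and the vanishing of $g_\mu$ off the support matches the role of Condition (A). Your version buys sharper information (true first-order asymptotics of numerator and denominator) and avoids the two-parameter sandwich, at the cost of a slightly more delicate domination argument on the half-line; the paper's version buys reusability, since the same Key Lemma also drives Theorems~\ref{theo:discrete} and \ref{theo:unique} and Remark~\ref{cor:1d_K2}. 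Both arguments share the same implicit nondegeneracy requirement that not both $g_\mu(a)$ and $g_\mu(b)$ vanish, which is already built into the statement's weight formula.
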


The third condition is   slightly technical, on the convergence of $h$ and $h'$ at (or around) $a$ and $b$. Observe that Assumption 1 was already present in Theorem \ref{theo:unique}, and that Assumption 2 is rather natural, telling us that behaviors close to $a$ and $b$ have negligible expectation compared to behaviors $a$ and $b$, which have comparable (equal) expected time.


\begin{restatable}{remark}{remdouble}
\label{cor:1d_K2}  
Theorem \ref{theo:twodominance} deals with the case in which the dominant states are at the boundary of the state space $\Xi$. One can also deal with the case in which the dominant states are in the interior of $\Xi$. This can be obtained under conditions slightly different than those of Theorem \ref{theo:twodominance}.   
Still with $\Xi=[a,b]$, $a,b\in\bR$ with $a<b$, and assuming that the probability measure $\mu$ on $\Xi$ has continuous density  $g_{\mu}$ with respect to the Lebesgue measure. Assume moreover that the functions $h_N(\theta)=N^{-1}\log(m_N(\theta))\in C^2([a,b])$ satisfy the following conditions:
\begin{enumerate}
    \item $h_N$ converges uniformly to a function $h\in C^2([a,b])$. 
    \item The function $h$ satisfies $\arg\max_{\theta\in\Xi} h(\theta)\cap {\rm supp}(\mu)=\{\theta^*_1,\theta^*_2\}$, where $\theta^*_1,\theta^*_2\in (a,b)$ are such that $\theta^*_1\neq \theta^*_2$,   $h(\theta^*_1)=h(\theta^*_2)>0$, $h'(\theta^*_1)=h'(\theta^*_2)=0$ and $h''(\theta^*_1)=h''(\theta^*_2)<0.$
    \item Moreover, for each $1\leq i\leq 2$, it holds that $N(h_N(\theta^*_i)-h(\theta^*_i))\to d_i$ where $|d_i|<\infty$, $|h'_N(\theta^*_i)|\leq C/N$ for all $N$ sufficiently large and for all $\delta>0$ sufficiently small $\sup_{\theta\in [\theta^*_i-\delta,\theta^*_i+\delta]}|h''_N(\theta)-h''(\theta)|$ converges to $0$ as $N\to\infty$.
\end{enumerate}
Then there is emerging dominance at $\{\theta_1^*,\theta_2^*\}$ with weights  $\{w_1,w_2\}$ given by ,
$$
w_i=\frac{e^{d_i}g_{\mu}(\theta^*_i)/\sqrt{-h''(\theta^*_i)}}{e^{d_1}g_{\mu}(\theta^*_1)/\sqrt{-h''(\theta^*_1)}+e^{d_2}g_{\mu}(\theta^*_2)/\sqrt{-h''(\theta^*_2)}}\;, \quad \ i=1,2\;.
$$
\end{restatable}


Theorem  \ref{theo:twodominance} and Remark \ref{cor:1d_K2} are proven in \ref{sec:proofs_as_distribution}. Their proof rely on Laplace's method for asymptotic evaluation of integrals which has been  extensively used in statistical theory, specially in Bayesian framework, to approximate integrals with respect to posterior distributions (see for instance \cite{Kass_et_al_99} and \cite{Wong1989Asymptotic}).

Our last theorem of the section complements the previous results by providing sufficient conditions under which there is no emergence of dominant state, i.e., the sequence of probability measures $\{\widehat{P}_N\}_{N\geq 1}$ converges weakly to a probability measure on $\Xi$ which is not a mixture of Dirac deltas. 

\begin{theorem}\label{th:nodominance}
If there exists a sequence  $(a_N)_{N\geq 1}$ such that $m_N/a_N$ converges monotonically $\mu$-almost surely to a positive function $\overline{m}$  such that $\mathbf{E}_\mu\left[\overline{m} \right]<\infty$, then $\widehat{P}_N(A) \to \frac{\mathbf{E}_\mu\left[\1_A \overline{m}\right] }{\mathbf{E}_\mu[\overline{m}]} $.
\end{theorem}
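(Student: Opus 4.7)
The plan is to exploit the observation that the ratio defining $\widehat{P}_N(A)$ is scale-invariant in $m_N$, so we may divide both numerator and denominator by the normalizing constant $a_N$ and reduce everything to a monotone convergence argument. Concretely, I would write
\begin{equation*}
\widehat{P}_N(A)\;=\;\frac{\mathbf{E}_\mu[\1_A\,m_N]}{\mathbf{E}_\mu[m_N]}\;=\;\frac{\mathbf{E}_\mu[\1_A\,m_N/a_N]}{\mathbf{E}_\mu[m_N/a_N]},
\end{equation*}
and then handle numerator and denominator separately via the Monotone Convergence Theorem applied to $m_N/a_N\to\overline{m}$, $\mu$-a.s.

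For the \emph{increasing} case, $\1_A m_N/a_N$ is a nonnegative sequence increasing $\mu$-a.s.\ to $\1_A\overline{m}$, and the MCT applies directly, giving $\mathbf{E}_\mu[\1_A m_N/a_N]\to \mathbf{E}_\mu[\1_A\overline{m}]$ (and similarly with $\1_A$ replaced by $1$). For the \emph{decreasing} case, we have $\overline{m}\leq m_N/a_N\leq m_1/a_1$; here one needs that the dominating term $m_1/a_1$ be $\mu$-integrable, which follows because under Assumption~\ref{Ass:1} each $m_N$ is continuous on the compact space $\Xi$ and therefore bounded, hence $\mu$-integrable (since $\mu(\Xi)=1$). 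Then the decreasing form of the MCT (equivalently, dominated convergence with dominator $m_1/a_1$) gives the same conclusion for both integrals.

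Finally, since $\overline{m}$ is strictly positive and $\mathbf{E}_\mu[\overline{m}]\in(0,\infty)$, both limits are finite and the denominator does not vanish, so we can pass to the ratio:
\begin{equation*}
\widehat{P}_N(A)\;\longrightarrow\;\frac{\mathbf{E}_\mu[\1_A\,\overline{m}]}{\mathbf{E}_\mu[\overline{m}]}.
\end{equation*}
The only non-routine point is really the integrability check in the decreasing case, which is immediate from continuity of $m_1$ on the compact space $\Xi$; everything else is a direct application of the MCT followed by taking the ratio of the two limits.
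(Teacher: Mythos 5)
Your proof is correct and follows essentially the same route as the paper, which simply states that the result ``follows directly from the Monotone Convergence Theorem'': you normalize by $a_N$, apply the MCT to numerator and denominator separately, and pass to the ratio. Your extra care in the decreasing case (checking integrability of $m_1/a_1$ via continuity of $m_1$ on the compact $\Xi$) is a sensible and correct elaboration of a detail the paper leaves implicit.
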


The proof of Theorem~\ref{th:nodominance} follows directly from  Monotone Convergence Theorem.

\subsubsection{Metastability}
 
We say that a state $\theta$ is \emph{metastable} if the inter-switching time $\tau_N$ corresponding to $\theta$ is asymptotically exponentially distributed, that is,
\[
\bP_{x_0, \delta_\theta}\left(\frac{\tau_{N}}{\bE_{x_0,\delta_\theta}[\tau_{N}]}>t\right) \;\underset{N\to\infty}{\longrightarrow}\; e^{-t}\;.
\]
Here, $\bP_{x_0, \delta_\theta}$ is the probability corresponding to a SSMC with $\mu=\delta_\theta$.   Observe that ${\bE_{x_0,\delta_\theta}[\tau_{N}]}= \bE_{x_0, \mu}\left[\tau_N(\Theta_1)|\Theta_1=\theta\right]$ was denoted by $m_N(\theta)$ in the previous subsections.
This terminology comes from the fact that exponential distribution of escape times from potential wells is the main feature of metastability \cite{fernandez2015asymptotically}.


Conversely, we say that a state $\theta \in \Xi$ exhibits \emph{cut-off} (following \cite{bertoncini2008cut}), if  there exist positive constants $c_1$ and $c_2$ (independent of $N$), such that
\[
\bP_{x_0, \delta_\theta}\left(c_1< \frac{\tau_{N}}{\bE_{x_0,\delta_\theta}[\tau_{N}]}<c_2\right) \;\underset{N\to\infty}{\longrightarrow}\; 1\;.
\]

Naturally, metastability and cut-off are two extreme situations, and many other situations can occur in between. However, the duality metastability \emph{vs.} cut-off has an interesting interpretation. The former corresponds to states that last an unpredictable random time -- the switching time cannot be predicted, while the later corresponds to  states that last for deterministic time. For SSMC metastability and cut-off may alternate in a single path as we will illustrate in the next section by simple examples. 

%
%

In order to establish metastability, we will make use of the following theorem, due to  \cite{fernandez2015asymptotically}. 

\begin{theorem}[Theorem~2.7 in \cite{fernandez2015asymptotically}]\label{thm:fernandezetal}
Let $\tau_N$ be the inter-switching time and denote by $\tau_{x_0}$ the first time the location process $\bfX$ visits the origin $x_0$. Suppose there exist two sequences of real numbers $(R_N)_{N\ge1}$ and $(r_N)_{N\ge1}$ satisfying $R_N>0$, $\frac{R_N}{\bE_{x_0, \delta_\theta}[\tau_N]}\rightarrow0$, $r_N\in(0,1)$, $r_N\rightarrow0$ such that 
\[
\sup_{x\in\mathcal X_N}\bP_{x,\delta_\theta}(\min\{\tau_N, \tau_{x_0}\}>R_N)\le r_N\;, \quad  N\ge1\;.
\] 
Then, $\frac{\tau_N}{\bE_{x_0,\delta_\theta}[\tau_N]}$ converges in distribution (with respect to $\bP_{x_0,\delta_\theta}$) to an exponential random variable with parameter $1$;  thus the state $\theta$ is metastable. 
\end{theorem}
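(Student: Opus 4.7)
The plan is to use a standard metastability argument based on the geometric decomposition of $\tau_N$ along successive excursions from $x_0$, combined with R\'enyi's classical theorem on the exponential limit of normalized geometric sums of i.i.d.\ random variables. The two essential inputs are a uniform geometric tail bound on the short time scale $R_N$, and the identification of $\bE_{x_0,\delta_\theta}[\tau_N]$ with a geometric sum via Wald's identity.

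First I would iterate the hypothesis by applying the strong Markov property at multiples of $R_N$. Setting $\sigma_N := \min\{\tau_N,\tau_{x_0}\}$, a straightforward induction yields
\[
\sup_{x\in\mathcal X_N}\bP_{x,\delta_\theta}(\sigma_N > kR_N) \le r_N^k, \qquad k\ge 1.
\]
Uniformly in the starting state, $\sigma_N$ has geometric tails at scale $R_N$; in particular $\bE_{x_0,\delta_\theta}[\sigma_N] \le R_N/(1-r_N)$, and more importantly the normalized variable $\sigma_N/\bE_{x_0,\delta_\theta}[\sigma_N]$ has uniformly bounded moments of all orders.

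Next I would apply the strong Markov property at successive visits to $x_0$ to realize $\tau_N$ as a geometric sum. Let $(\sigma_N^{(k)}, I_k)_{k\ge 1}$ be i.i.d.\ copies of the pair $(\sigma_N, \1_{\{\tau_N<\tau_{x_0}\}})$, and set $G_N := \min\{k : I_k = 1\}$, which is geometric of parameter $p_N := \bP_{x_0,\delta_\theta}(\tau_N<\tau_{x_0})$. Then $\tau_N$ has the same law as $\sum_{k=1}^{G_N} \sigma_N^{(k)}$, so by Wald's identity $\bE_{x_0,\delta_\theta}[\tau_N] = \bE_{x_0,\delta_\theta}[\sigma_N]/p_N$. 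The assumption $R_N/\bE_{x_0,\delta_\theta}[\tau_N]\to 0$ combined with $\bE_{x_0,\delta_\theta}[\sigma_N] = O(R_N)$ then forces $p_N\to 0$.

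Finally I would invoke the triangular-array version of R\'enyi's theorem. Writing $\varphi_N(\lambda) := \bE[\exp(-\lambda \sigma_N^{(1)}/\bE_{x_0,\delta_\theta}[\sigma_N])]$ and using the generating function identity $\bE[z^{G_N}] = p_N z/(1-(1-p_N)z)$, one computes
\[
\bE\bigl[\exp(-\lambda \tau_N/\bE_{x_0,\delta_\theta}[\tau_N])\bigr] = \frac{p_N\,\varphi_N(\lambda p_N)}{1-(1-p_N)\varphi_N(\lambda p_N)}.
\]
The uniformly bounded second moment of $\sigma_N^{(1)}/\bE_{x_0,\delta_\theta}[\sigma_N]$ yields $\varphi_N(\lambda p_N) = 1 - \lambda p_N + O(p_N^2)$, so the right-hand side converges to $1/(1+\lambda)$, which is the Laplace transform of $\mExp(1)$. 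This proves the claimed distributional convergence. The main obstacle is the triangular-array nature of the problem, as both the excursion law and the success probability depend on $N$; the uniform tightness of $\sigma_N^{(1)}/\bE_{x_0,\delta_\theta}[\sigma_N]$ provided by the first step is precisely what makes the Taylor expansion of $\varphi_N$ uniform enough for the limit to carry through.
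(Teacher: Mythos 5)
The paper does not prove this statement: it is imported verbatim as Theorem~2.7 of \cite{fernandez2015asymptotically} and used as a black box, so there is no internal proof to compare against. (For the record, the cited authors argue by iterating their recurrence hypothesis to obtain an approximate factorization of the tail $\bP(\tau_N>t+s)\approx\bP(\tau_N>t)\bP(\tau_N>s)$, rather than by your geometric-sum/R\'enyi route.) Your architecture is nonetheless a standard and viable one, and several steps are correct as written: the iteration giving $\sup_x\bP_{x,\delta_\theta}(\sigma_N>kR_N)\le r_N^k$, the excursion decomposition of $\tau_N$ at successive returns to $x_0$, and the Wald identity $\bE_{x_0,\delta_\theta}[\tau_N]=\bE_{x_0,\delta_\theta}[\sigma_N]/p_N$ (valid because $\{G_N\ge k\}$ depends only on the first $k-1$ pairs). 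However, two of your intermediate claims are false as stated, and both sit at load-bearing points.

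First, the geometric tail bound controls $\sigma_N$ at scale $R_N$, not at scale $\bE_{x_0,\delta_\theta}[\sigma_N]$; these can be of different orders (e.g.\ if most excursions return to $x_0$ in $O(1)$ steps while a fraction of order $1/R_N$ of them last about $R_N$ steps, then $\bE[\sigma_N]=O(1)$ but $\bE[\sigma_N^2]\gtrsim R_N$), so ``uniformly bounded moments of $\sigma_N/\bE[\sigma_N]$'' does not follow from the hypotheses, and the remainder in your expansion of $\varphi_N(\lambda p_N)$ is not $O(p_N^2)$. What you actually need, and can get, is $\bE_{x_0,\delta_\theta}[\sigma_N^2]\le C\,R_N\,\bE_{x_0,\delta_\theta}[\sigma_N]$ (write $\bE[\sigma_N^2]\lesssim\sum_{j}\bE[\1_{(\sigma_N>j)}\bE_{X_j}[\sigma_N]]\le\sup_y\bE_{y,\delta_\theta}[\sigma_N]\cdot\bE[\sigma_N]$ and bound $\sup_y\bE_{y,\delta_\theta}[\sigma_N]\le R_N/(1-r_N)$); combined with $R_N/\bE_{x_0,\delta_\theta}[\tau_N]\to0$ this yields the required $o(p_N)$ remainder. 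Second, your Laplace-transform identity treats $G_N$ as independent of the summands, but $G_N$ is a function of the $I_k$ and $\sigma_N^{(k)}$ is correlated with $I_k$ (the length of an excursion is not independent of whether it reaches $T_N$ before returning to $x_0$). The correct identity is $\bE[e^{-s\tau_N}]=v_N(s)/(1-u_N(s))$ with $u_N(s)=\bE[e^{-s\sigma_N}\1_{(I=0)}]$ and $v_N(s)=\bE[e^{-s\sigma_N}\1_{(I=1)}]$, and to recover the limit $1/(1+\lambda)$ you additionally need $\bE[\sigma_N\1_{(I=1)}]=o(\bE[\sigma_N])$, which follows from Cauchy--Schwarz together with the second-moment bound above. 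With these two repairs the argument closes, so the gap is one of justification rather than of overall strategy.
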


\section{Self-switching environment Random Walks}\label{sec:examples}

The SSMCs were introduced in a rather general way, without detailing the dynamic of the location process, encoded by the matrices $Q^{(\theta)},\theta\in\Xi$. We now consider specific cases to illustrate  the phenomena discussed in Section~\ref{sec:Ntoinfty}.  These examples are  built upon one-dimensional simple random walks models on increasing subsets of $\mathbb Z$.
In all the examples, we assume that $\Xi=[0,1]$ and that $\mu$  has support in a compact subset of $(0,1]$.

 \subsection{Physical interpretation}
 Given a transition matrix $Q$ of nearest neighbors random walk  on $\cX\subset \bZ$ (containing $0$), we call ``potential'' the  function $V:\mathbb R\rightarrow \mathbb R$ interpolating the following values
\begin{equation}\label{eq:potential} 
V(n)=\left\{
\begin{array}{cc}
\sum_{i=1}^{n}\log \frac{Q(i,i-1)}{Q(i,i+1)}&n\in\{1,2,\ldots\}\cap\cX\;,\\
0&n=0\\
-\sum_{i=n+1}^{0}\log \frac{Q(i,i-1)}{Q(i,i+1)}&n\in\{-1,-2,\ldots\}\cap\cX\;.
\end{array}
\right.
\end{equation}
The potential gives us a physical intuition of the different dynamics. We refer to Figure \ref{fig:potential} for a schematic illustration of the three types of potentials we are considering below.

 \subsection{Biological interpretation}
 Animal behaviors are often thought to be random walks in a state space with transition probabilities controlled by few parameters \cite{codling2008random}. For example, when observing the movement of a fly inside a chamber, we think of the sequence of positions of the fly as following a random walk in constrained state space. As another example, we can use random walks with barriers to model the neural process of making decisions \cite{fific2010logical}. In these models, when a random walk reaches one of the barriers in the state space, it means that the person made a decision. Different behaviors of the animal are modeled by distinct transition probabilities of the random walk. In both of our examples, whenever the random walk reaches the barrier, we can think that it restarts the dynamics following transition probabilities that are chosen dynamically. The SSERW introduced below are simple one-dimensional models that capture these ideas. The key phenomenon is the emergence of dominating states (transition probabilities) even when the states are chosen independently of the process. In the fly example, that would imply that, in the long run, we would expect few flying strategies to dominate the observations even without a learning mechanism. Similarly, for the decision-making, we expect that, in the long run, we primarily observe few decision-making strategies, despite the lack of learning. The dominance of few states is in agreement with the observation that only few major behavioral states are observed in animals \cite{brown2018ethology}.

\subsection{Flat potential SSERW}\label{sec:gr}

Let $\cX_N=[-N,N] \cap \bZ$, $T_N=\{-N,N\}$ and $x_0=0$.  
For any $N\ge1$, let  $\bfQ_N={\bf Q}$, where  for any $\theta\in\Xi$, $Q^{(\theta)}$ is defined as follows: for  $x \in \cX_N\setminus \{-N,N\}$
\begin{align}
\label{def:gambler-ruin}
Q^{(\theta)}(x,y)=
\begin{cases}
\theta\;, & \text{if} \ y=x+1\;, \\
1-\theta\;, & \text{if} \ y=x-1\;, \\
0\;, & \text{otherwise}\;,
\end{cases}
\end{align}
whereas $Q_N^{(\theta)}(x,\cdot)$ can be chosen  arbitrary for $x \in \{-N, N\}$ because it does not influence the dynamics.
 We call this model flat potential SSERW because of the linear shape of the corresponding potential function $V(n)$ (see Figure \ref{fig:potential}).
In this example the state $\theta$ corresponds to the  transition  probability of jumping one step to the right. 
At time $0$ a state is sampled according to $\mu$ and the random walk starts in $0$.  When the walker hits the target set $\{-N,N\}$ the SSMC starts afresh. Note that the hitting time depends on the current state. Specifically,  the quantity $m_N(\theta)$ defined by \eqref{eq:def_m}, denotes here  the expected time taken by a simple random walk started at the origin to hit $\{-N,N\}$, when it has probability $\theta$ to jump to the right (classical Gambler's ruin on $[-N,N]\cap \bZ$ starting from $0$). Thus, it is well known that, for all $\theta\in [0,1]$ 
\begin{align}\label{eq:timeGR}
m_N(\theta)=\begin{cases}
N^2\;,   & \text{ if }   \theta=\frac{1}{2}\;,
\\
\frac{N}{1-2\theta} -\frac{2N}{1-2\theta}\frac{1}{\left(\frac{1-\theta}{\theta}\right)^{N} +1}\;, & \text{ if } \theta \neq \frac{1}{2}\;. 
\end{cases}
\end{align}
As the above equation shows, the expected switching time for $\theta=1/2$ grows faster in $N$ than the expected switching time  for any $\theta \neq 1/2$. This suggests that there might be an emergence of dominant state, namely  the one corresponding to $\theta=1/2$. Specifically, we have that:

\begin{itemize}
    \item If  $\mu$ is discrete with finite support and  $1/2 \in {\rm supp}(\mu)$,  Theorem~\ref{theo:discrete} implies that there is emerging dominance at $\theta=1/2$.
\end{itemize}

For general $\mu$ the question is more delicate. Note that, {although}  the function $m_N$ above is continuous on $[0,1]$, symmetric around $1/2$ and $
\arg\max_{\theta\in [0,1]} m_N(\theta)=1/2$, we cannot apply Theorem~\ref{theo:unique} (since the corresponding limiting function is $h\equiv 0$).

Let us consider the functions $\overline{ m}_N(\theta)=N^{-1}m_N(\theta),\theta\in[0,1],N\ge1$ and observe from
\eqref{eq:timeGR} that $\overline{ m}_N(\theta)$ converges (pointwise) monotonically  to
\begin{equation}
\label{def:limit_density_GR}
\overline{ m}(\theta)=\frac{1}{|1-2\theta|}\;, \quad  \theta\in [0,1]\;,
\end{equation}
(with the convention that $\frac{1}{0}=\infty$). We then obtain:
\begin{itemize}
    \item If $\mu$    is such that $\mathbf{E}{_\mu}(\overline{m})<\infty$,  Theorem~\ref{th:nodominance} implies that the corresponding sequence of probability measures
$\{\widehat{P}_N\}_{N\geq 1}$ converges weakly, as $N\to \infty$, to a probability measure $\mathbf{P_{\mu}}$ on $[0,1]$ , where 
    \[
      \mathbf{P_{\mu}}(d\theta)=
     \frac{\overline{m}(\theta)}{\mathbf{E}_{\mu}[\overline{m}]}\mu(d\theta)\;. 
     \]
Thus, in this case, there is no emergence of dominant states. 
\item If $\mu$    is such that $\mathbf{E}{_\mu}(\overline{m})=+\infty$,  it can be proved\footnote{The proof goes along the same lines as the proof  of Theorem~\ref{theo:unique}.} that  there is emerging dominance at $\theta=1/2$ also.
\end{itemize}
{\begin{remark}\label{rem:opt}
Suppose that ${\rm supp}(\mu)$ is any finite set of real numbers in $(0,1/2)$.  In this case we see that, for any $\theta_1,\theta_2\in{\rm supp}(\mu)$
\[
\frac{m_N(\theta_1)}{m_N(\theta_2)}\rightarrow \frac{1-2\theta_2}{1-2\theta_1}\ne0,
\]
showing that Assumption (1) of Theorem \ref{theo:discrete} does not hold, while Assumption (2) holds. But since  $\mathbf{E}{_\mu}(\overline{m})<\infty$, there is no emergence of dominant states, showing that Assumption (1) cannot be relaxed in general. 
\end{remark}}
\medskip 
The next result states that, for the flat potential SSERW, any  state $\theta\neq 1/2$ cuts-off  asymptotically {\color{blue} (Proof in \ref{sec:proofs_unpredict}).}

 \begin{restatable}{proposition}{propmetaGR}
\label{thm:metastability_GR}
Let  $\tau_N$ denotes the first time the flat-potential random walk hits the set $\{-N, N\}$.
For any $\theta\ne 1/2$, it holds  that
$$
\frac{\tau_{N}}{\bE_{0,\delta_\theta}[\tau_{N}]}\underset{N\to\infty}{\longrightarrow} 1\;,  \quad \text{ $\bP_{0,\delta_\theta}$-almost surely}\;.
$$
\end{restatable}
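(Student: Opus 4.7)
The plan is to reduce the almost-sure convergence to a consequence of the strong law of large numbers (SLLN), by coupling all the truncated walks to a single biased random walk on $\mathbb{Z}$.

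First, I would observe that since $\mathbb{E}_{0,\delta_\theta}[\tau_N]=m_N(\theta)$, formula \eqref{eq:timeGR} gives $m_N(\theta)/N \to 1/|1-2\theta|$ as $N\to\infty$ (for $\theta>1/2$ the term $((1-\theta)/\theta)^N$ vanishes, for $\theta<1/2$ it explodes; in either case the limit is $1/|1-2\theta|$). So it is enough to prove the deterministic statement $\tau_N/N \to 1/|1-2\theta|$, $\bP_{0,\delta_\theta}$-almost surely.

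Next, because the transitions in \eqref{def:gambler-ruin} are nearest-neighbor and depend only on $\theta$, I would couple all the processes across $N$ by realizing a single biased random walk $(S_n)_{n\ge 0}$ on $\mathbb{Z}$ with $S_0=0$ and step distribution $\bP(+1)=\theta$, $\bP(-1)=1-\theta$; then for every $N$, $\tau_N=\inf\{n\ge 0:|S_n|=N\}$. Assume WLOG $\theta>1/2$ (the case $\theta<1/2$ is symmetric). By the SLLN, $S_n/n\to 2\theta-1>0$ almost surely, hence $S_n\to+\infty$ and $M:=-\inf_{n\ge 0}S_n<\infty$ a.s. For $N>M$ the walk cannot reach $-N$ before $+N$, so $\tau_N$ coincides with the one-sided hitting time $H_N:=\inf\{n:S_n=N\}$, which is a.s.\ finite and satisfies $H_N\to\infty$ as $N\to\infty$.

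Finally, I would invert SLLN along the (random) subsequence $(H_N)$: since $S_{H_N}=N$ and $H_N\to\infty$, the SLLN yields $N/H_N=S_{H_N}/H_N\to 2\theta-1$ almost surely, so $\tau_N/N=H_N/N\to 1/(2\theta-1)=1/|1-2\theta|$. Dividing by the asymptotic of $m_N(\theta)/N$ gives $\tau_N/\mathbb{E}_{0,\delta_\theta}[\tau_N]\to 1$ a.s.\ and the claim follows. There is no real obstacle: the only point that needs care is the a.s.\ identification $\tau_N=H_N$ for large $N$, which is exactly the transience of the biased walk; the rest is standard renewal/SLLN bookkeeping.
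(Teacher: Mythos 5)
Your proposal is correct and follows essentially the same route as the paper: the paper proves this proposition by the argument of item (ii) of Theorem~\ref{thm:metastabilitySW}, namely coupling to a single biased walk, using the strong law of large numbers to get $|X_{\tau_N}|/\tau_N\to|2\theta-1|$ with $|X_{\tau_N}|=N$, and combining with $m_N(\theta)/N\to 1/|1-2\theta|$. Your extra step identifying $\tau_N$ with the one-sided hitting time via transience is harmless but not needed, since $|X_{\tau_N}|=N$ holds whichever endpoint is hit.
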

  
For  $\theta=1/2$, it is known that $\frac{\tau_{N}}{\bE_{0,\delta_{1/2}}[\tau_{N}]}$ converges in distribution to an unbounded random variable whose distribution is known  \cite{erd1946certain} {but is not exponential, meaning that this state is neither metastable  nor asymptotically cut-off.}

\subsection{Single-well potential SSERW}\label{sec:sw}

As in the previous section, let   $\cX_N=[-N,N]\cap \bZ$,  $T_N=\{-N,N\}$ and $x_0=0$.  For $N\ge1$, let $\bfQ_N={\bf Q}$, where  for any $\theta\in\Xi$, $Q^{(\theta)}$ is defined as follows: 
\begin{align*}
\label{def:single_well}
  &\text{ if } x \in [1, N-1] \cap \bZ\; :
\\
& \qquad \qquad  Q^{(\theta)}(x,y)=
\begin{cases}
\theta\;, & \text{if} \ y=x+1\;, \\
1-\theta\;, & \text{if} \ y=x-1\;, \\
0\;, & \text{otherwise}\;,
\end{cases}
\\
&\text{ if } x \in [ -N+1, 0] \cap \bZ\;  : 
\\
& \qquad \qquad Q^{(\theta)}(x,y)=
\begin{cases}
1-\theta\;, & \text{if} \ y=x+1\;, \\
\theta\;, & \text{if} \ y=x-1\;, \\
0\;, & \text{otherwise}\;
,
\end{cases}
\end{align*}
whereas $Q_N^{(\theta)}(x,\cdot)$ can be chosen  arbitrary for $x \in \{-N, N\}$ because it does not influence the dynamics.
 We call this model single-well potential SSERW because of the single peak/through shape of the corresponding potential function $V(n)$ (see Figure \ref{fig:potential}).
Figure~\ref{Pic:single_well}  represents the dynamic of the single-well  walker.  As opposed to the flat potential random walk discussed in the previous section, here a state $\theta$ corresponds  to the probability of increasing by one unit the distance from the origin $x_0=0$. Thus, for small values of $\theta$, the single-well random walk  may take a very long time to reach the target set $\{-N, N\}$, while  if $\theta$ is close to $1$, the random walk  is pushed away from the origin and ``rapidly'' reaches  $\{-N, N\}$.  See Figure \ref{fig:potential}, graphics C and D, for a pictorial representation. This intuition is formalized in the following proposition {\color{blue} (Proof in \ref{sec:proofs_expectations}).}

\begin{figure}[h]
\begin{tikzpicture}[scale=1.0, shorten >=0.5pt,  >=stealth,  semithick]

\tikzstyle{every state}=[scale=0.2,draw, fill]

\node at (-6, 0)   (a) {};
\node at (6, 0)   (A) {} ;
\draw[very thin, dotted] (a) -- (A);


\node[state] at (0, 0)   (0)  {};
\node at (0,-0.3)   {$0$};

\node[state, fill=white] at (1, 0)   (1) {};
\node at (1,-0.3)   {$1$};

\node[state, fill=white] at (2, 0)   (2) {};
\node at (3, 0)   (3) {};
\node[state, fill=white] at (4, 0)   (4) {};
\node[state, fill=white] at (5, 0)   (5) {};
\node at (5,-0.3)   {$N$};
\node[state] at (-1, 0)   (-1) {};
\node at (-1,-0.3)   {$-1$};
\node[state] at (-2, 0)   (-2) {};
\node at (-3, 0)   (-3) {};
\node[state] at (-4, 0)   (-4) {};
\node[state] at (-5, 0)   (-5) {};
\node at (-5.15,-0.3)   {$-N$};

\path[->, dashed] (4)  edge   [bend left]  node [above] {$\theta$} (5);
%
\path[->, dashed] (4)  edge   [bend left]  node [below] {$1-\theta$} (3);

\path[->] (-4)  edge   [bend left]  node [below] {$\theta$} (-5);
%
\path[->] (-4)  edge   [bend left]  node [above] {$1-\theta$} (-3);
\end{tikzpicture}
\caption{Single-well random walk between $-N$ and $N$ in a $\theta$-environment. White states (from $1$ to $N$) have transition probabilities $\theta$ to the right and $1-\theta$ to the left; black states (from $-N$ to $0$) have opposite transition probabilities. }
\label{Pic:single_well}
\end{figure}
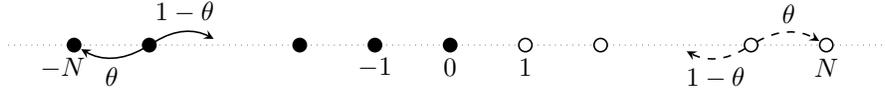

%
%
\begin{restatable}{proposition}{propSW}
\label{Prop:exp_of_tau_SW}
For every integer $N\geq 1$ and $\theta\in (0,1]$, it holds that
\begin{equation}
\label{Expression_for_exp_of_tau_SW}
m_N(\theta)=
\begin{cases}
 N^2\;, & \text{if} \ \theta=1/2\;,\\
\frac{2\theta(1-\theta)}{(1-2\theta)^2}e^{N\log((1-\theta)/\theta)}-\frac{1}{1-2\theta}\left(N+\frac{2\theta(1-\theta)}{1-2\theta}\right)\;, & \text{otherwise}\;.
\end{cases}
\end{equation}
\end{restatable}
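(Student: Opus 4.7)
The plan is to exploit the reflection symmetry $x\mapsto -x$ of the transition kernel in order to reduce the two-sided hitting problem to a one-sided problem on $\{0,1,\ldots,N\}$, and then to solve the resulting linear recurrence for the expected hitting time by the standard first-difference method.

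First I would verify directly from the definition of $Q^{(\theta)}$ that the kernel is invariant under $x\mapsto -x$, so that $u(x):=\mathbb{E}_{x,\delta_{\theta}}[\tau_N]$ satisfies $u(x)=u(-x)$. Thus it suffices to determine $u$ on $\{0,1,\ldots,N\}$. The usual Dirichlet analysis gives the system
\begin{align*}
u(N) &= 0, \\
u(x) &= 1 + \theta\, u(x+1) + (1-\theta)\, u(x-1), \qquad x\in\{1,\ldots,N-1\}, \\
u(0) &= 1 + u(1),
\end{align*}
where the last equation uses $Q^{(\theta)}(0,1)=1-\theta$, $Q^{(\theta)}(0,-1)=\theta$ together with $u(-1)=u(1)$. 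Since this is a linear boundary value problem on a finite set with the diagonal dominance coming from the $+1$ inhomogeneity, its solution is unique and agrees with $m_N(\theta)=u(0)$.

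Next I would introduce the forward differences $e(k):=u(k)-u(k+1)$ for $k=0,\ldots,N-1$. The interior recursion becomes $\theta\,e(k)=1+(1-\theta)\,e(k-1)$, i.e.\ the first-order linear recursion
$$e(k) = r\, e(k-1) + \tfrac{1}{\theta}, \qquad r:=\tfrac{1-\theta}{\theta},$$
with initial condition $e(0)=1$ coming from the reflection boundary. An elementary integration gives, when $r\neq 1$,
$$e(k) = r^{k} + \frac{1-r^{k}}{1-r}\cdot\frac{1}{\theta},$$
and $e(k)=2k+1$ in the critical case $r=1$ (i.e.\ $\theta=1/2$). Then $m_N(\theta)=u(0)=\sum_{k=0}^{N-1}e(k)$. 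For $\theta=1/2$ this telescopes immediately to $\sum_{k=0}^{N-1}(2k+1)=N^2$. For $\theta\neq 1/2$, summing a geometric series and using $1-r=(2\theta-1)/\theta$ together with $(2\theta-1)^2=(1-2\theta)^2$, one rearranges the result into
$$m_N(\theta) = \frac{2\theta(1-\theta)}{(1-2\theta)^2}\, r^{N} \;-\;\frac{1}{1-2\theta}\Bigl(N+\frac{2\theta(1-\theta)}{1-2\theta}\Bigr),$$
which, since $r^{N}=e^{N\log((1-\theta)/\theta)}$, is exactly the claimed formula.

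The main obstacle is entirely algebraic rather than conceptual: there are several equivalent ways to rearrange the telescoping sum and one must be careful to match the precise presentation $\tfrac{2\theta(1-\theta)}{(1-2\theta)^2}e^{N\log((1-\theta)/\theta)}-\tfrac{1}{1-2\theta}(N+\tfrac{2\theta(1-\theta)}{1-2\theta})$ stated in the proposition. A sanity check at $\theta=1/2$ (where both the series $\sum e(k)$ and the closed form collapse to $N^2$, the latter via a L'Hôpital/Taylor expansion) and at $\theta\to 1^{-}$ (where one expects $m_N(\theta)\to N$) is useful for detecting sign errors.
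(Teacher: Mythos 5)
Your computation is correct and arrives at exactly the stated formula (I checked the telescoping: with $r=(1-\theta)/\theta$ one gets $e(k)=\frac{-2(1-\theta)}{2\theta-1}r^k+\frac{1}{2\theta-1}$, and summing over $k=0,\dots,N-1$ reproduces $\frac{2\theta(1-\theta)}{(1-2\theta)^2}r^N-\frac{N}{1-2\theta}-\frac{2\theta(1-\theta)}{(1-2\theta)^2}$, as claimed). Your route is genuinely different from the paper's. The paper never writes down the Dirichlet system: it applies the strong Markov property twice to obtain the renewal-type identity $\bE_{0,\delta_\theta}[\tau_{\{-N,N\}}]=\bigl(1+\bE_{1,\delta_\theta}[\tau_{\{0,N\}}]\bigr)/\bP_{1,\delta_\theta}(\tau_{\{0\}}\geq\tau_{\{N\}})$ and then substitutes the classical Gambler's-ruin expressions for the two quantities on the right. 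That argument is shorter but leans on quoted formulas; yours is self-contained and, as a by-product, yields $u(x)$ for every starting point $x$ (which the paper in fact needs later, in the proof of Theorem~\ref{th:meta_aw}, where it re-derives the monotonicity of $x\mapsto\bE^{\sw}_{x,\delta_\theta}[\tau_{\{-N,N\}}]$ by a separate first-step argument).

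One imprecision to fix: the kernel is \emph{not} invariant under $x\mapsto-x$ at the origin, since $Q^{(\theta)}(0,1)=1-\theta\neq\theta=Q^{(\theta)}(0,-1)$ for $\theta\neq1/2$; it is invariant only on $\mathcal X_N\setminus\{0\}$. The symmetry $u(x)=u(-x)$ that you actually use is nevertheless true, but it needs a different justification: either observe that $\{|X_n|\}$ is itself a Markov chain on $\{0,\dots,N\}$ (from $0$ it jumps to $1$ with probability one, and from $k\geq1$ it increases with probability $\theta$ regardless of the sign of $X_n$) and that $\tau_{\{-N,N\}}$ is a function of $|X|$, or note that $h(x):=u(-x)$ solves the same boundary-value problem as $u$ (the equation at $0$ agrees automatically because $(1-\theta)u(1)+\theta u(-1)$ and $\theta h(1)+(1-\theta)h(-1)$ coincide) and invoke uniqueness. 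With that one sentence repaired, your derivation of the boundary condition $u(0)=1+u(1)$, and hence the whole proof, is sound; this is also consistent with the paper's remark that $m_N(\theta)$ does not depend on the transition probabilities at $0$.
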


\begin{remark}
It turns out that   $m_N(\theta)$ does not depend on the transition probabilities at $0$.  However, these transition probabilities determine the probability 
that the walker hits the state $N$ before $-N$, i.e.,  $\bP_{0,\mu}(X_{\tau_{1}}=N|\Theta_1=\theta)=1-\theta
$. 
\end{remark}

From Proposition \ref{Prop:exp_of_tau_SW} we obtain that  for any $N\ge1$, the function $m_N$ is continuous. Moreover, the function $h_N(\theta)=N^{-1}\log(m_N(\theta))$ is continuous  and monotonically decreasing, converging pointwise on any compact subset of $(0,1]$ to
\begin{equation}
\label{def:limit_potential_SW}
h(\theta)= \begin{cases}
\log \left((1-\theta)/\theta\right)\;, & \theta < 1/2\;,
\\
0\;,  & \theta\geq 1/2\;.  
\end{cases}    
\end{equation}
The function $h$ is continuous and non-increasing on $[0,1]$. Finally,  by Lemma \ref{Lemma:SW_converges_uniformly} in \ref{app:lemmas}, the convergence is uniform on $[a, 1]$ for any fixed $a\in (0,1)$.  
Hence, we obtain that: 

\begin{itemize}
    \item If $a^*:= \sup\{a>0: \mu([0,a))=0\}\in(0,1/2)$, Theorem~\ref{theo:unique} implies that there is emerging dominance at $\theta=a^*$.
    \end{itemize}

On the contrary, if $a^*:= \sup\{a>0: \mu([0,a))=0\}\ge1/2$, the assumption of  Theorem~\ref{theo:unique} does not hold.  We therefore analyze $\overline{m}_N(\theta) = N^{-1}m_N(\theta), N\geq 1$, observing that it monotonically converges (pointwise) to
\[
\overline{m}(\theta):=\frac{1}{2\theta-1}\;, \quad  \theta\ge1/2\;,
\]
(with the convention that $\frac{1}{0}=\infty$).
We then obtain that:
\begin{itemize}
\item If $a^*:= \sup\{a>0: \mu([0,a))=0\}>1/2$, we have that $\mathbf{E}_{\mu}[\overline{m}]<+\infty$, and Theorem~\ref{th:nodominance} implies that the corresponding sequence of probability measures
$\{\widehat{P}_N\}_{N\geq 1}$ converges weakly, as $N\to \infty$, to a probability measure $\mathbf{P_{\mu}}$ on $[0,1]$ , where 
    \[
      \mathbf{P_{\mu}}(d\theta)=
     \frac{\overline{m}(\theta)}{\mathbf{E}_{\mu}[\overline{m}]}\mu(d\theta)\;. 
     \]
In this case, there is no emergence of dominant states.   
\item If $a^*:= \sup\{a>0: \mu([0,a))=0\}=1/2$, we further split in two cases:
\begin{itemize}\item If $\mathbf{E}_{\mu}[\overline{m}]<+\infty$, Theorem~\ref{th:nodominance} implies that 
$\{\widehat{P}_N\}_{N\geq 1}$ converges weakly, as $N\to \infty$, to $      \mathbf{P_{\mu}}(d\theta)=
     \frac{\overline{m}(\theta)}{\mathbf{E}_{\mu}[\overline{m}]}\mu(d\theta)$, and   there is no emergence of dominant states.
\item If  $\mathbf{E}_{\mu}[\overline{m}]=+\infty$, it can be shown\footnote{The proof goes along the same lines as the proof  of Theorem~\ref{theo:unique}.} that there is emerging dominance at $\theta=1/2$. 
\end{itemize}
\end{itemize}

\medskip 
As far as the metastability in the single-well SSERW is concerned, we have the following result {\color{blue} (Proof in \ref{sec:proofs_unpredict}).} 

\begin{restatable}{theorem}{thmmetaSW}\label{thm:metastabilitySW}
Let $\tau_N$ denote the first time the single-well random walk hits the target set $\{-N, N\}$. Then
\begin{enumerate}[i)]
\item For any $0<\theta<1/2$, it holds  that
$$
\frac{\tau_N}{\bE_{0,\delta_\theta} [\tau_N]}\underset{N\to\infty}{\longrightarrow} \mExp(1) \ \text{(in distribution $\bP_{0,\delta_\theta}$)}\;.
$$
\item For any $1/2<\theta\leq 1$, it holds that
$$
\frac{\tau_N}{\bE_{0,\delta_{\theta}}[\tau_N]}\underset{N\to\infty}{\longrightarrow} 1\;, \quad   \text{ $\bP_{0,\delta_\theta}$-almost surely}\;.
$$
\end{enumerate}
\end{restatable}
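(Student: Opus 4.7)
The two parts have different flavors: (i) is a metastability statement, tailored to Theorem~\ref{thm:fernandezetal}, while (ii) is a cut-off statement, which I would attack via a strong law of large numbers for the absolute value process.

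For (i), with $0 < \theta < 1/2$, the plan is to apply Theorem~\ref{thm:fernandezetal} with $R_N = N^2$ (any polynomial growing sub-exponentially would do). The condition $R_N / m_N(\theta) \to 0$ follows immediately from Proposition~\ref{Prop:exp_of_tau_SW}, which shows that $m_N(\theta)$ grows exponentially like $((1-\theta)/\theta)^N$. The real content is the uniform tail bound $\sup_{x \in \mathcal{X}_N} \bP_{x, \delta_\theta}(\min\{\tau_N, \tau_{x_0}\} > R_N) \le r_N$. Here the key observation is that under $\bP_{x,\delta_\theta}$ the process $Z_n := |X_n|$ is itself a Markov chain on $\{0, 1, \ldots, N\}$ which, at every interior state, moves away from $0$ with probability $\theta < 1/2$ and toward $0$ with probability $1-\theta$. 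Hence $Z$ is biased toward $0$, and $\min\{\tau_N, \tau_{x_0}\}$ coincides with the hitting time of $\{0, N\}$ by $Z$ started at $|x|$; a standard gambler's ruin computation bounds the corresponding expectation uniformly by $N/(1-2\theta)$, so Markov's inequality yields $r_N = C(\theta)/N \to 0$.

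For (ii), with $1/2 < \theta \le 1$, Proposition~\ref{Prop:exp_of_tau_SW} gives $m_N(\theta) = N/(2\theta - 1) + O(1)$, so the target reduces to proving $\tau_N/N \to 1/(2\theta - 1)$ almost surely. I would again work with $Z_n = |X_n|$, now a biased walk on $\{0, 1, \ldots, N\}$ with drift $2\theta - 1 > 0$ in the interior and a deterministic jump from $0$ to $1$. Realizing $Z$ as the analogous walk on $\{0, 1, 2, \ldots\}$ stopped at $N$, a gambler's ruin argument shows that the infinite-state walk is transient for $\theta > 1/2$, so its local time at $0$ is almost surely finite. Decomposing $Z_n$ into its Doob martingale part plus the compensator $\sum_{k<n}\bE[Z_{k+1}-Z_k\mid \cF_k] = (2\theta-1)(n-L_n) + L_n$, where $L_n$ is the local time at $0$, transience forces $L_n/n \to 0$, while the martingale part is $o(n)$ almost surely thanks to bounded increments (Azuma plus Borel-Cantelli). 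Hence $Z_n/n \to 2\theta - 1$ almost surely, and substituting $n = \tau_N$ yields $N/\tau_N = Z_{\tau_N}/\tau_N \to 2\theta - 1$ almost surely, as required.

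The main obstacle I anticipate is the strong law in (ii): the chain $Z_n$ is not a sum of i.i.d.\ increments because of the reflection at $0$, so the classical SLLN cannot be invoked directly. However, once transience is secured, the compensator-plus-martingale decomposition isolates the non-i.i.d.\ contribution into a single local-time term that is almost surely bounded, and the remaining analysis is routine. Part (i), by contrast, is a bookkeeping exercise once the uniform gambler's ruin estimate on $\min\{\tau_N, \tau_{x_0}\}$ is in hand.
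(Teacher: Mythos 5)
Your proposal is correct and follows essentially the same route as the paper: part (i) via Theorem~\ref{thm:fernandezetal} with a gambler's-ruin bound on the expected hitting time of $\{-N,0,N\}$ followed by Markov's inequality, and part (ii) via a law of large numbers for $|X_n|$ evaluated at the random time $n=\tau_N$ together with $\bE_{0,\delta_\theta}[\tau_N]/N\to 1/(2\theta-1)$. The only difference is that where the paper simply invokes the strong law of large numbers for the reflected walk $|X_n|$, you supply the martingale-plus-compensator justification (isolating the local time at $0$, which is almost surely finite by transience) that handles the non-i.i.d.\ increments at the origin --- a welcome refinement, since the paper's one-line appeal to the SLLN technically requires exactly this argument.
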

{Thus, states $\theta \in (0,1/2)$ are metastable, while states $\theta \in (1/2,1]$ are asymptotically {cut-off}.  Interestingly, depending on the support of $\mu$, the chain may alternate between these two phenomena  when $N$ is sufficiently large. 
Observe that the case $\theta=1/2$ is exactly the same  as $\theta=1/2$ in the flat potential SSERW seen before; hence, also here the state $\theta=1/2$ is none of the above. }

\subsection{Alternating-wells potential SSERW}\label{sec:aw}

In this section, building upon  the  single-well random walk, we introduce a simple model whose main feature consists in the emergence of {\em two} dominant states (on the contrary of the previous examples where we always  had either emergence of a unique dominant state or no emergence at all). 

Let   $\cX_N=[-2N,2N]\cap \bZ$,  $T_N=\{-2N,2N\}$ and $x_0=0$.  For $N\ge1$,  let $\bfQ_N$ be such that, for any $\theta\in\Xi$, $Q_N^{(\theta)}$ is defined as follows: \begin{align*}
\label{def:alternating_well}
  &\text{ if } x \in \left([-2N+1, -N-1]  \cup     [N+1, 2N-1]\right) \cap \bZ\; :
\\
& \qquad \qquad \qquad \qquad Q_N^{(\theta)}(x,y)=
\begin{cases}
\theta\;, & \text{if} \ y=x+1\;, \\
1-\theta\;, & \text{if} \ y=x-1\;, \\
0\;, & \text{otherwise}\;,
\end{cases}
\\
& \text{ if } x \in [-N, N] \cap \bZ\;  : 
\\
& \qquad \qquad \qquad \qquad Q_N^{(\theta)}(x,y)=
\begin{cases}
1-\theta\;, & \text{if} \ y=x+1\;, \\
\theta\;, & \text{if} \ y=x-1\;, \\
0\;, & \text{otherwise}\;,
\end{cases}
\end{align*}
whereas $Q_N^{(\theta)}(x,\cdot)$ can be chosen  arbitrary for $x \in \{-2N, 2N\}$ because it does not influence the dynamics.
 We call this model alternating-well potential SSERW because of the double peak/through shape of the corresponding potential function $V(n)$ (see Figure \ref{fig:potential}).
The dynamic is depicted  in Figure~\ref{Pic:alternating_wells}. Intuitively, for large values of $\theta$, the random walk has a bias towards reaching  state $-N$. Once in $-N$, since $\theta$ is large the walker spends a lot of time around $-N$ before reaching the target state $-2N$ or coming back to state $0$. If $\theta$ is small, the situation is analogous  around the state $N$, rather than $-N$. In essence, for large (\emph{resp.} small) values of $\theta$,  the state $-N$ (\emph{resp.} $N$) plays the same role as the origin in the single-well random walk in Section
~\ref{sec:sw}. However, the random walk can now alternate between $-N$ and $N$ depending on the value of $\theta$. See Figure \ref{fig:potential}, graphics E and F, for a pictorial representation.

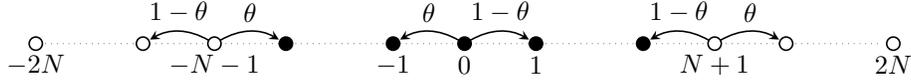
\begin{figure}[h]
\begin{tikzpicture}[scale=0.95, shorten >=0.5pt,  >=stealth,  semithick
]

\tikzstyle{every state}=[scale=0.2,draw, fill]

\node at (-6, 0)   (a) {};
\node at (6, 0)   (A) {} ;
\draw[very thin, dotted] (a) -- (A);


\node[state] at (0, 0)   (0)  {$0$};
\node at (0,-0.3)   {$0$};

\node[state] at (1, 0)   (1) {};
\node at (1,-0.3)   {$1$};

\node at (2, 0)   (2) {};
\node[state] at (2.5, 0)   (3) {};
\node[state, fill=white] at (3.5, 0)   (4) {};
\node at (3.5,-0.3)   {$N+1$};
\node[state, fill=white] at (4.5, 0)   (5a) {};
\node at (4.5, 0)   (5b) {};
\node[state, fill=white] at (6, 0)   (6) {};
\node at (6,-0.3)   {$2N$};

\node[state] at (-1, 0)   (-1) {};
\node at (-1,-0.3)   {$-1$};

\node at (-2, 0)   (-2) {};
\node[state] at (-2.5, 0)   (-3) {};
\node[state, fill=white] at (-3.5, 0)   (-4) {};
\node at (-3.5,-0.3)   {$-N-1$};
\node[state, fill=white] at (-4.5, 0)   (-5a) {};
\node at (-4.5, 0)   (-5b) {};
\node[state, fill=white] at (-6, 0)   (-6) {};
\node at (-6,-0.3)   {$-2N$};

\path[->] (0)  edge   [bend left]  node [above] {$1-\theta$} (1);
\path[->] (0)  edge   [bend right]  node [above] {$\theta$} (-1);

\path[->] (-4)  edge   [bend left]  node [above] {$\theta$} (-3);
\path[->] (-4)  edge   [bend right]  node [above] {$1-\theta$} (-5a);

\path[->] (4)  edge   [bend right]  node [above] {$1-\theta$} (3);
\path[->] (4)  edge   [bend left]  node [above] {$\theta$} (5a);

\end{tikzpicture}
\caption{Alternating-wells random walk between $-2N$ and $2N$ in a   $\theta$-environment. White states (from $-2N$ to $-N-1$ and from $N+1$ to $2N$) have transition probabilities $\theta$ to the right and $1-\theta$ to the left; black states (from $-N$ to $N$) have opposite transition probabilities. }
\label{Pic:alternating_wells}
\end{figure}

%

In the following proposition we compute the expected switching time corresponding to each state $\theta$, for any $N\ge1$ {\color{blue} (Proof in \ref{sec:proofs_expectations}).}

\begin{restatable}{proposition}{propAW}
\label{Prop:esp__of_tau_alternating_wells}
For any integer $N\geq 1$ and $\theta \in (0,1)$, it holds that
\begin{equation*}
m_N(\theta)=
\begin{cases}
4N^2\;, & \text{if} \ \theta=1/2\;,
\\
\frac{\frac{2\theta(1-\theta)}{(1-2\theta)^2}}{ \theta \left(\frac{\theta}{1-\theta} \right)^{N} + 1-\theta }\left( 1-\left(\frac{\theta}{1-\theta}\right)^N\right) \left( \left(\frac{1-\theta}{\theta}\right)^N-   \left(\frac{\theta}{1-\theta} \right)^{N}
\right)\;, 
&  \text{otherwise}\;.
\end{cases}
\end{equation*}
\end{restatable}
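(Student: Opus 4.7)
The plan is to compute $m_N(\theta)=\bE_{0,\delta_\theta}[\tau_N]$ by solving the Dirichlet problem for $f(x):=\bE_{x,\delta_\theta}[\tau_N]$, which is characterized by $f(\pm 2N)=0$ together with the piecewise linear recurrence
\begin{equation*}
f(x)=1+p(x)f(x+1)+(1-p(x))f(x-1),
\end{equation*}
where $p(x)=\theta$ on the outer regions $|x|\in[N+1,2N-1]$ and $p(x)=1-\theta$ on the inner region $x\in[-N,N]$. When $\theta=1/2$ the process reduces to the simple symmetric random walk on $[-2N,2N]$ absorbed at $\pm 2N$, so $f(0)=(2N)^2=4N^2$ by the classical variance identity, giving the first line of the stated formula. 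The substantive case is $\theta\neq 1/2$.

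Set $\rho:=(1-\theta)/\theta$. The characteristic equation of the homogeneous recurrence has roots $1$ and $\rho$ in the outer regions and $1$ and $\rho^{-1}$ in the inner region (the inversion reflects that the roles of $\theta$ and $1-\theta$ are swapped there), and linear particular solutions with opposite signs handle the inhomogeneous term. I would therefore write
\begin{equation*}
f(x)=\begin{cases} A_1+B_1\rho^x+\dfrac{x}{1-2\theta}, & x\in[N,2N],\\[4pt] A_3+B_3\rho^{-x}-\dfrac{x}{1-2\theta}, & x\in[-N,N],\\[4pt] A_2+B_2\rho^x+\dfrac{x}{1-2\theta}, & x\in[-2N,-N],\end{cases}
\end{equation*}
and pin down the six unknowns with: the absorbing boundary conditions $f(\pm 2N)=0$; two continuity conditions at $x=\pm N$ requiring that the outer formula, extended to $\pm N$ via the outer recurrences at $\pm(N+1)$ (a short computation shows this extension is automatic), coincide with the inner formula evaluated at $\pm N$; and two further equations coming from the inner recurrences at $x=\pm N$, which couple the inner values at $\pm N$ and $\pm(N\mp 1)$ to the outer values at $\pm(N+1)$.

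I would then eliminate $A_1,B_1$ using the boundary at $2N$ and continuity at $N$, and symmetrically $A_2,B_2$ on the left, reducing the problem to a $2\times 2$ linear system for $A_3,B_3$. Solving it and setting $f(0)=A_3+B_3$ yields the claimed closed form. The main obstacle is purely algebraic: keeping the bookkeeping under control during the simplification into the stated closed form. Useful sanity checks are the symmetry $m_N(\theta)=m_N(1-\theta)$ (the chain is invariant under the joint involution $\theta\leftrightarrow 1-\theta$, $x\leftrightarrow -x$) and the recovery of $4N^2$ in the limit $\theta\to 1/2$ via L'Hopital's rule. A lighter alternative is the first-passage decomposition
\begin{equation*}
m_N(\theta)=\bE_{0,\delta_\theta}[\sigma]+\bP_{0,\delta_\theta}(X_\sigma=N)\,\bE_{N,\delta_\theta}[\tau_N]+\bP_{0,\delta_\theta}(X_\sigma=-N)\,\bE_{-N,\delta_\theta}[\tau_N],
\end{equation*}
with $\sigma:=\inf\{n\ge 1:X_n\in\{-N,N\}\}$; here $\bE_{0,\delta_\theta}[\sigma]$ and the hitting probabilities are Gambler's-ruin quantities in the inner region, available from the flat-potential formulas of Section~\ref{sec:gr} with $\theta$ and $1-\theta$ interchanged, while $\bE_{\pm N,\delta_\theta}[\tau_N]$ can be assembled by analysing outer and inner excursions from $\pm N$ via first-passage formulas of the same type used for Proposition~\ref{Prop:exp_of_tau_SW}.
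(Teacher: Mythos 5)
Your proposal is correct, but your primary route is genuinely different from the paper's. You propose to solve the Dirichlet problem for $f(x)=\bE_{x,\delta_\theta}[\tau_N]$ directly, writing the general solution of the inhomogeneous linear recurrence piecewise on the three constant-drift regions and pinning down the six constants via the absorbing boundary conditions, the matching at $\pm N$, and the (genuinely constraining) inner recurrences at $x=\pm N$; your count of unknowns and equations is right, the homogeneous roots $1,\rho^{\pm 1}$ and the particular solutions $\pm x/(1-2\theta)$ are correct, and the $\theta=1/2$ case and the symmetry check $m_N(\theta)=m_N(1-\theta)$ are both valid. The paper instead uses a strong-Markov/renewal decomposition: it splits $\tau_{\{-2N,2N\}}$ according to whether the walk first reaches $N$ or $-N$, then from $\pm N$ according to whether it reaches $\pm 2N$ or returns to $0$ (in which case the whole experiment regenerates), takes expectations, and solves a single scalar linear equation for $m_N(\theta)$; all the ingredient expectations and hitting probabilities are identified with already-computed Gambler's-ruin and single-well quantities from \eqref{eq:timeGR} and Proposition~\ref{Prop:exp_of_tau_SW}. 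Your ``lighter alternative'' at the end is essentially this argument (once you note that $\bE_{\pm N,\delta_\theta}[\tau_N]$ itself satisfies a renewal relation through returns to $0$, so the decomposition closes into one equation). The trade-off: your boundary-value approach is self-contained and mechanical but demands solving and simplifying a $6\times 6$ (reducible to $2\times 2$) system, while the paper's route recycles Proposition~\ref{Prop:exp_of_tau_SW} and reduces the algebra to one scalar equation at the cost of carefully bookkeeping the excursion probabilities.
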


From Proposition \ref{Prop:esp__of_tau_alternating_wells} we obtain that the function $m_N$ is continuous and symmetric around $1/2$, for every $N\ge1$. Moreover, the function $h_N(\theta)=N^{-1}\log(m_N(\theta))$ is continuous, monotonically decreasing on $(0,1/2]$ and  monotonically increasing on $[1/2,1)$. It converges pointwise on  $(0,1)$ to
\begin{equation}
\label{def:limit_potential_AW}
h(\theta)=|
\log \left((1-\theta)/\theta\right)|  \;,   
\end{equation}
and,  by Lemma \ref{Lemma:AW_converges_uniformly} in \ref{app:lemmas}, the convergence is uniform on $[a, b]$ for any fixed $a\in (0,1/2)$ and $b \in (1/2,1)$.
The function $h$ is monotonically decreasing on $(0,1/2]$ and monotonically increasing on $[1/2,1)$.  

Let us  define
$a^*=\sup\{a\geq 0: \mu([0,a])=0\}$ and $b^*=\inf\{b\geq a^*: \mu([a^*,b])=1\}$; thus  $\text{supp}(\mu)\subseteq [a^*,b^*]$. We have the following situations. 
\begin{itemize}
    \item If $a^*\neq 1-b^*$, Theorem~\ref{theo:unique} implies that there is emerging dominance at $\theta^*=\{a^*\}$ if $a^*<1-b^*$, and at $\theta^*=\{b^*\}$ if $a^*>1-b^*$. 
  \item {If $a^*=1-b^*$,  then $\arg\max_{\theta\in[a^*,b^*]}h(\theta)$ is not unique, and Theorem~\ref{theo:unique} does not apply. In this case, if we assume that $\mu$  has continuous density $g_{\mu}$ with respect to the Lebesgue measure, we can use Theorem \ref{theo:twodominance}. First notice that, due to the symmetry of the functions $m_N$ and $h$ in \eqref{def:limit_potential_AW}, it holds that $h(a^*)=h(b^*)$, $-h'(a^*)=h'(b^*)>0$ and $d_1=d_2$. In this case,   there is emerging dominance at $\{a^*,b^*\}$ with weights
\[
w_{a^*}=\frac{g_{\mu}(a^*)}{g_{\mu}(a^*)+g_{\mu}(b^*)}\;,  \quad w_{b^*}=1 - w_{a^*}\;.
\]
}
  
  In passing, note that if the density $g_\mu$ is constant (i.e., $\mu$ is uniform on $[a^*, b^*]$), then $w_{a^*}=w_{b^*}=1/2$.


 \end{itemize}

We conclude this section with the following result stating that, in the alternating-wells SSERW,  any state  $\theta\neq 1/2$ is  metastable {\color{blue} (Proof in \ref{sec:proofs_unpredict}).}

\begin{restatable}{theorem}{thmmetaAW}\label{th:meta_aw} Let $\tau_N$ denote the first time the alternating-wells random walk hits the target set $\{-2N, 2N\}$. Then, 
for any $\theta\in [0,1]\setminus\{1/2\}$, it holds that 
$$
\frac{\tau_N}{\bE_{0,\delta_\theta}[\tau_N]}\underset{N\to\infty}{\longrightarrow} \mExp(1) \ \text{(in distribution $\bP_{0,\delta_\theta}$)}\;.
$$
\end{restatable}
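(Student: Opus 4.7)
The plan is to invoke Theorem \ref{thm:fernandezetal} after a reduction that shifts its reference point from the SSMC origin $x_0 = 0$ to the well bottom. By the symmetry $(\theta,x)\mapsto (1-\theta,-x)$ of the alternating-wells dynamics, it suffices to treat $\theta \in (1/2,1)$; in this regime the drift pulls the walker toward the potential well located around $-N$. A direct application of Theorem \ref{thm:fernandezetal} with reference $0$ fails because $0$ is a local maximum of the potential $V$: starting from $x = -N \pm 1$, the walker remains trapped inside the well for an exponentially long (in $N$) time before either returning to $0$ or hitting $\{-2N, 2N\}$, so no sub-exponential $R_N$ can satisfy the sup condition of Theorem \ref{thm:fernandezetal}.

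Instead, I would apply Theorem \ref{thm:fernandezetal} with the shifted reference $\hat{x}_0 = -N$; the underlying Markov-chain argument of Fern\'andez et al.\ is agnostic to the choice of reference point. Let $\hat{\tau}$ denote the first hitting time of $-N$ by $\bfX$. Choose $R_N = N^3$, so that $R_N / \bE_{-N,\delta_\theta}[\tau_N] \to 0$, because $\bE_{-N,\delta_\theta}[\tau_N]$ is of the same exponential order $(\theta/(1-\theta))^N$ as $m_N(\theta)$ (Proposition \ref{Prop:esp__of_tau_alternating_wells}; the asymptotic equivalence is justified in the last paragraph). The sup condition
\[
\sup_{x \in \mathcal{X}_N} \bP_{x,\delta_\theta}\bigl(\min\{\tau_N,\hat{\tau}\} > R_N\bigr) \leq e^{-cN}
\]
is then verified by splitting the range of the starting point $x$ into three regions and applying Hoeffding-type concentration for the biased partial sums: on the inner region $[-N,N]$ and the outer-left region $[-2N+1,-N-1]$, the drift drives the walker toward $-N$ in $O(N)$ expected time, while on the outer-right region $[N+1, 2N-1]$ the drift drives it to the target $2N$ in $O(N)$ expected time. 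Theorem \ref{thm:fernandezetal} then gives $\tau_N / \bE_{-N,\delta_\theta}[\tau_N] \to \mExp(1)$ in distribution under $\bP_{-N,\delta_\theta}$.

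To transfer the convergence to $\bP_{0,\delta_\theta}$ and normalize by $m_N(\theta)$, use the strong Markov decomposition $\tau_N = \hat{\tau} + \tilde{\tau}_N$ on the event $\{\hat{\tau} \leq \tau_N\}$, where $\tilde{\tau}_N$, conditional on $\mathcal{F}_{\hat{\tau}}$, is distributed as $\tau_N$ under $\bP_{-N,\delta_\theta}$. A standard gambler's-ruin computation on the inner interval $[-N,0]$ gives $\bE_{0,\delta_\theta}[\hat{\tau}] = O(N)$ and $\bP_{0,\delta_\theta}(\hat{\tau} > \tau_N) = O(e^{-cN})$; combined with the identity
\[
\bE_{0,\delta_\theta}[\tau_N] = \bE_{0,\delta_\theta}[\hat{\tau} \wedge \tau_N] + \bP_{0,\delta_\theta}(\hat{\tau} < \tau_N)\,\bE_{-N,\delta_\theta}[\tau_N],
\]
this yields both $\hat{\tau}/m_N(\theta) \to 0$ in $\bP_{0,\delta_\theta}$-probability and $\bE_{-N,\delta_\theta}[\tau_N] = m_N(\theta)(1 + o(1))$. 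Slutsky's lemma then delivers $\tau_N / m_N(\theta) \to \mExp(1)$ in distribution under $\bP_{0,\delta_\theta}$. The principal obstacle is verifying the sup bound in the outer-left region, where a walker initialized close to $-2N$ could drift toward absorption at $-2N$ in competition with the rightward drift toward $-N$; handling the hitting of two competing boundaries inside a single concentration estimate, uniformly in the starting point, is the most delicate piece.
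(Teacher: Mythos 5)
Your proposal is correct in outline, and it takes a genuinely different route from the paper's. The paper also invokes Theorem~\ref{thm:fernandezetal}, but keeps the recurrence reference at the SSMC origin $0$: it bounds $\sup_{x}\bP_{x,\delta_\theta}(\tau_{\{-2N,0,2N\}}>R_N)$ by Markov's inequality together with a Claim identifying $\max_x \bE^{\aw}_{x,\delta_\theta}[\tau_{\{-2N,0,2N\}}]$ as being attained at the well bottom ($-N$ for $\theta>1/2$, $N$ for $\theta<1/2$). Your opening observation is exactly the sensitive point of that strategy: since $\bE^{\aw}_{-N,\delta_\theta}[\tau_{\{-2N,0\}}]=\bE^{\sw}_{0,\delta_{1-\theta}}[\tau_{\{-N,N\}}]$, Proposition~\ref{Prop:exp_of_tau_SW} shows this maximal expectation grows like $(\theta/(1-\theta))^{N}$ for $\theta>1/2$, the same exponential order as $m_N(\theta)$ itself, so no $R_N$ with $R_N/m_N(\theta)\to 0$ can satisfy the sup condition anchored at $0$; the paper's final estimate $N/(R_N|2\theta-1|)$ in fact uses the bound that its own Claim attaches to the \emph{peak} $\pm N$ rather than to the well bottom where the maximum sits. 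Shifting the reference to the well bottom, as you do (which requires the general form of Theorem~2.7 of Fern\'andez et al.\ rather than the specialization to $x_0$ stated as Theorem~\ref{thm:fernandezetal}), makes the recurrence condition verifiable with polynomial $R_N$, since from every starting point the drift carries the walker to $\{-2N,-N,2N\}$ in $O(N)$ expected time; the competing-boundary worry you raise in the outer-left region is actually harmless, because exiting at $-2N$ is also a success for $\min\{\tau_N,\hat\tau\}$, so a one-sided drift bound on $\tau_{\{-2N,-N\}}$ suffices. Your transfer step is likewise sound: $\bP_{0,\delta_\theta}(\hat\tau>\tau_N)$ is exponentially small, $\bE_{0,\delta_\theta}[\hat\tau\wedge\tau_N]=O(N)=o(m_N(\theta))$, and your identity yields $\bE_{-N,\delta_\theta}[\tau_N]=m_N(\theta)(1+o(1))$, so Slutsky closes the argument. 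The price of your route is the extra decomposition at $\hat\tau$; what it buys is a verification of the hypotheses of the metastability criterion that actually goes through at the trapping states.
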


For $\theta=1/2$, the alternating-wells random walk is equivalent to the single-well (as well as flat potential).

\medskip 
Figure~\ref{fig:potential} depicts intuitively the potentials corresponding to different states in the three  examples: flat-potential, single-well and alternating-wells potential. 

\begin{figure}[h]
\centering
\includegraphics[scale = 0.7]{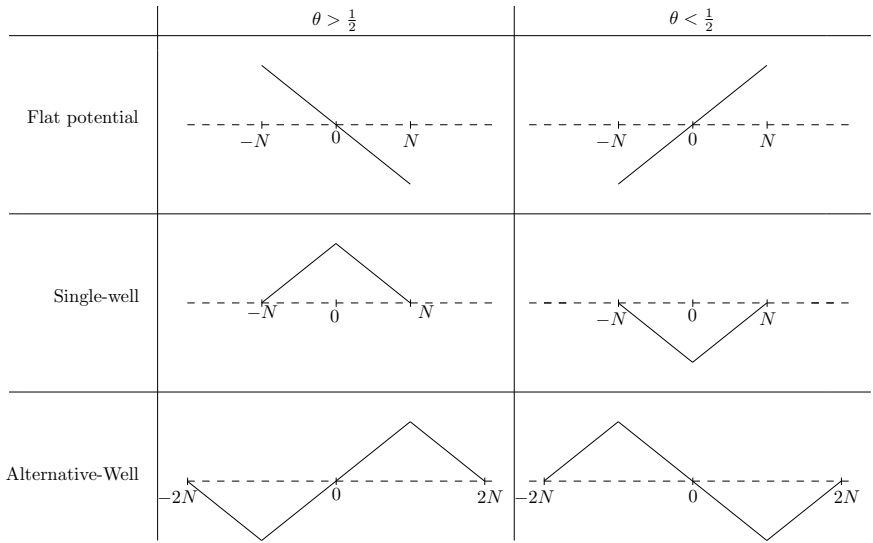}
\caption{ Rough representation of the potential for different models, according to definition \eqref{eq:potential}.}
\label{fig:potential}
\end{figure}

\section{Final Comments}\label{sec:final}
 
In this article, we introduced the class of SSMCs. A SSMC is a bivariate Markov chain where the interaction between the components is both ways.  
On one hand, the first component (the location process) is driven by the second component (the state process). On the other hand,  the second component changes its current value at random times  corresponding to hitting times of the first component. 
For this class of models, we exhibited some conditions under which we observe the emerging dominance phenomenon. These conditions also allow us to characterize the dominant states. Our results were illustrated through the SSERWs (see, Section~\ref{sec:examples}), simple examples of SSMCs.
A few natural extensions to the SSMCs and results introduced in this paper could be considered. 

\begin{enumerate}[(a)]
\item\label{b} {\bf Changing the starting point.}    In our model, every time the location process $\{X_i\}_{i\in \mathbb{N} }$  hits the  target set $T$, the SSMC 
$\{(X_i, \eta_i)\}_{i\in \mathbb{N}}$ start afresh according to the distribution 
$\delta_{x_0} \times \mu$; that is the location process is set to the origin $x_0$ and a new state is sampled according to $\mu$ (independently). 
One could consider a variant in which when the SSMC starts afresh, the new starting point of the chain is chosen according to a distribution $\nu \times \mu$, or even according to $\nu^x \times \mu$, with $x \in T$. These cases would account for the situation in which the origin is sampled according to a distribution $\nu$, or even according to a distribution $\nu^x$  which depends on the target point  $x \in T$, which was hit in the previous ``round'' of the SSMC.
Note however, that when the starting point of the chain is chosen according to $\nu^x \times \mu$, with $x \in T$, the inter-switching times would not necessarily be i.i.d.

\item{\bf Markovian dependence of the parameter process.} In SSMC, the state process is kept constant during inter-switching times. A possible variation may consider a Markovian dependence on the state process.  This could be done by using a transition kernel $\mathcal K:\Xi\times\Xi\rightarrow[0,1]$, and now, $\mu$ would represent the initial distribution of this  Markov process.
This would introduce a stronger dependence between the location process and the state process and   it is not clear whether the emergence of dominant states still holds. 
 
\item {\bf Convergence of the state process.} We showed the metastability and cut-off only for a fixed value of the state process. This is not very satisfactory as we do not know if similar convergence holds for the entire process. The main difficulty is that different states have distinct normalizing constant, therefore, a single normalizing constant for the state process does not result in a interesting limit process. On the other hand, considering a case of multiple metastable dominant states with comparable $m_N(\theta_i^*)$ (as in the alternated wells SSERW for example),  the normalizing constants $\mathbb E_{x_0,\delta_{\theta^*_i}}(\tau_N)$ would be comparable too. So, rescaling  time by their order may yield a jump process between dominant states with exponential waiting times with parameter depending on the current state. This is a question we are planning to investigate in the future. 
\end{enumerate}

\subsection{A biological consequence}
Although animals typically have a large degree of freedom for producing movements, only relatively few behaviors are observed in Nature. For example, humans can create many different postures, but generally, we only observe few types of behaviors. The relatively small number of observed behaviors is what allows ethologists to study animal behavior \cite{brown2018ethology}. The standard explanation for the predominance of a specific behavior is usually associated with the animal's predisposition to produce the behavior or some learning mechanism that makes the behavior more frequent. In our model, we can think of modeling the predisposition or learning by a fixed or history-dependent state distribution $\mu$, respectively. Our results suggest that a third explanation exists: Emergence of dominance state by the typical duration of the behavior - \emph{the longer it lasts the merrier}.

\section*{Acknowledgements}
 The authors  thank the anonymous referee for the helpful remarks and corrections. This research has been conducted as part of FAPESP project {\em Research, Innovation and
Dissemination Center for Neuromathematics} (grant 2013/07699-0).
SG thanks CNPq (Universal 439422/2018-3) and FAPESP (2019/23439-4) for financial support, and DME-UFRJ for hospitality during part of the redaction of this paper. GI and GO thank PIPGE-UFSCar (CAPES) for supporting a short visit to UFSCar where substantial part of this paper were done. 
DYT thanks PIPGE-UFSCar (CAPES) and IM-UFRJ (CAPES) for supporting the visits that gave the opportunity to collaborate on this work.

\newpage
\appendix

\section{Proofs}\label{app:proofs}

The proofs  are separated by ``themes'' so that proofs using similar arguments appear together. There are three main themes: 
\begin{itemize}
\item asymptotic distribution of $\widehat{P}_N,N\ge1$,
 in \ref{sec:proofs_as_distribution} (proofs of Proposition~\ref{Prop:P_hat_N_n_convto_P_hat_N}, Theorems \ref{theo:discrete}, \ref{theo:unique} and  \ref{theo:twodominance} and Remark  \ref{cor:1d_K2});
\item expectation of hitting times  $\tau_N,N\ge1$, in \ref{sec:proofs_expectations} (proofs of Propositions \ref{Prop:exp_of_tau_SW} and \ref{Prop:esp__of_tau_alternating_wells});
\item metastability/cut-off, in \ref{sec:proofs_unpredict} (proofs of Proposition \ref{thm:metastability_GR} and Theorems \ref{thm:metastabilitySW} and \ref{th:meta_aw}).
\end{itemize}
Finally, in  \ref{app:lemmas}, we give some auxiliary lemmas used in the proofs.

\subsection{$\widehat{P}_N$ and its asymptotic analysis}
\label{sec:proofs_as_distribution}

\medskip 

\propLLN*
\begin{proof}[Proof of Proposition~\ref{Prop:P_hat_N_n_convto_P_hat_N}]
To avoid clutter, we shall denote $\tau_i(\Theta_i)$ simply as $\tau_i$. If we define  $M_n=\sup\{k\geq 1: \tau_{1}+\ldots+\tau_{k}\leq n\}$, for $n\geq 1$, the empirical measure $\hat{p}_{n}(A)$ can be written as
$$
\hat{p}_{n}(A)=\frac{1}{n}\sum_{i=1}^{M_n}\tau_{i} \1_{(\Theta_i\in A)}
 +\frac{1}{n}\left(n-\sum_{i=1}^{M_n}\tau_i\right)\1_{(\Theta_{M_{n}+1}\in A)}
\;.
$$
Moreover,
$\sum_{i=1}^{M_n}\tau_{i}\leq n< \sum_{i=1}^{M_n+1}\tau_{i}$, which implies
\begin{equation}
\label{ineq_1:Prop:P_hat_N_n_convto_P_hat_N}
\frac{\sum_{i=1}^{M_n}\tau_{i} \1_{(\Theta_i\in A)}}{\sum_{i=1}^{M_n+1}\tau_{i}}\leq \widehat{p}_{n}(A)\leq \frac{\sum_{i=1}^{M_n}\tau_{i} \1_{(\Theta_i\in A)}}{\sum_{i=1}^{M_n}\tau_{i}} +\frac{\left(n-\sum_{i=1}^{M_n}\tau_i\right)\1_{(\Theta_{M_{n}+1}\in A)}}{n}\;.
\end{equation}
As mentioned in Remark~\ref{rem:iid} the random variables  $\{\tau_{i}\}_{i\geq 1}$  are i.i.d. and under Assumption~\ref{Ass:1} they have finite expectation $\mathbf{E}_{\mu}[m]$. In particular, this implies  that $M_n\to\infty$, $\bP_{x_0, \mu}$-almost surely as $n\to\infty$. Since the $\Theta_i,i\ge1$ are also i.i.d., it follows that $\1_{(\Theta_i\in A)}\tau_{i},i\geq 1$ are i.i.d. with expectation $\mathbf{E}_{\mu}[\1_Am]$. Thus, by the Strong Law of Large Numbers,
 the term of the lhs and first term of rhs of Equation~\eqref{ineq_1:Prop:P_hat_N_n_convto_P_hat_N} converge to $\widehat{P}(A)$, while the second term of the rhs vanishes, since $\frac{\left(n-\sum_{i=1}^{M_n}\tau_i\right)\1_{(\Theta_{M_{n}+1}\in A)}}{n}\leq \frac{\left(n-\sum_{i=1}^{M_n}\tau_i\right)}{n}\leq \frac{\tau_{M_n +1}}{n}$ and $\tau_{M_n +1}/n\to 0$ almost surely.

\end{proof}

\subsubsection{A key Lemma for proving emerging dominance}

Before we give the proofs of the theorems concerning the asymptotic distribution of $\widehat{P}_N,N\ge1$, we state and prove a general  ``Key Lemma'', which gives the more general conditions under which we were able to prove emerging dominance. Before we can state it, we need some further notation. 

First, recall that, for any $N\ge1$, $\widehat{P}_N(A):=\frac{\mathbf{E}_{\mu}\left(\1_Am_N\right)}{\mathbf{E}_{\mu}(m_N)}$ for any measurable set $A\subset\Xi$. 
We write ${\rm supp}(\mu)$ to indicate the support of a probability measure $\mu$. For any $r>0$ and $\theta\in\Xi$, we denote $B_r(\theta)=\{\hat\theta \in \Xi: d(\hat\theta, \theta)<r\}$ the ball of radius $r$ centerred at $\theta$  {according to some metric $d$ on $\Xi$}. For any measurable set $V\subset\Xi$, we define
\[
I_N [V]:=\int_{V} m_N(\theta)\mu(d\theta)\;.
\]

\begin{lemma}[Key Lemma]\label{th:main}
Let $(\Xi,d)$ be a compact metric space and $\mu$ a probability measure on $\Xi$. 
%
Assume that there are $\theta^*_1,\ldots, \theta^*_K\in \Xi\cap{\rm supp}(\mu)$ satisfying
the following conditions:
\begin{itemize}
    \item[(A)] For every $\delta>0$ sufficiently small there exists $1\leq i\leq K$ such that
    $$
    \frac{I_N\left[\left(\bigcup_{j=1}^K B_{\delta}(\theta^*_j) \right)^c\right]}{I_N\left[B_{\delta}(\theta^*_i)\right]} \underset{N\to \infty}{\longrightarrow} 0\;.
    $$
    \item[(B)] For every $\delta>0$ sufficiently small and for each $1\leq i,j\leq K$,  $i\neq j$, it holds
    $$
    C_{1,i,j}(\delta)\leq \liminf_{N\to\infty}\frac{I_N\left[B_{\delta}(\theta^*_j)\right]}{I_N\left[ B_{\delta}(\theta^*_i)\right]}\leq   \limsup_{N\to\infty}\frac{I_N\left[B_{\delta}(\theta^*_j)\right]}{I_N\left[ B_{\delta}(\theta^*_i)\right]}\leq C_{2,i,j}(\delta)\;.$$
    Moreover, the limits $\lim_{\delta\to 0}C_{1,i,j}(\delta)$ and $\lim_{\delta\to 0}C_{2,i,j}(\delta)$ exist and $$\lim_{\delta\to 0}C_{1,i,j}(\delta)=\lim_{\delta\to 0}C_{2,i,j}(\delta)=:c_{i,j}>0\;.$$
\end{itemize}
%
Then, there is emerging dominance at $\{\theta^*_1,\ldots,\theta^*_K\}$ with weights
$$
 w_{i}= \frac{1}{1 + \sum_{\substack{j=1\\ j \neq i}}^K c_{i,j}}\;,\quad i=1,\ldots,K.
$$
\end{lemma}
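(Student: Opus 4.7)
The plan is to establish weak convergence of $\widehat{P}_N$ to $\sum_{i=1}^K w_i \delta_{\theta_i^*}$ by analyzing the mass that $\widehat{P}_N$ assigns to small balls around each candidate dominant point. Let $V_\delta := \bigcup_{j=1}^K B_\delta(\theta_j^*)$. Since the $\theta_i^*$ are distinct points of $\Xi$, choose $\delta_0 > 0$ small enough that the balls $B_\delta(\theta_i^*)$, $1 \leq i \leq K$, are pairwise disjoint for every $\delta \in (0, \delta_0)$. It then suffices to prove (i) $\widehat{P}_N(V_\delta^c) \to 0$ as $N \to \infty$ for each such $\delta$, and (ii) for each $i$,
\[
\lim_{\delta \to 0}\liminf_{N\to\infty} \widehat{P}_N(B_\delta(\theta_i^*)) \;=\; \lim_{\delta \to 0}\limsup_{N\to\infty} \widehat{P}_N(B_\delta(\theta_i^*)) \;=\; w_i.
\]
Given (i) and (ii), weak convergence to the Dirac mixture is standard: for any bounded continuous $f$ on $\Xi$ and $\varepsilon > 0$, uniform continuity lets one choose $\delta$ so small that $|f(\theta) - f(\theta_i^*)| < \varepsilon$ on $B_\delta(\theta_i^*)$, and one then splits $\int f\, d\widehat{P}_N$ over $V_\delta$ and $V_\delta^c$.

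A preliminary step is to upgrade hypothesis (A), which only provides a single distinguished index $i$, to the statement that $I_N[V_\delta^c]/I_N[B_\delta(\theta_\ell^*)] \to 0$ as $N \to \infty$ for \emph{every} $\ell \in \{1,\ldots,K\}$. This is immediate from (B): using $\liminf_N I_N[B_\delta(\theta_\ell^*)]/I_N[B_\delta(\theta_i^*)] \geq C_{1,i,\ell}(\delta) > 0$, one may divide and obtain
\[
\frac{I_N[V_\delta^c]}{I_N[B_\delta(\theta_\ell^*)]} \;=\; \frac{I_N[V_\delta^c]/I_N[B_\delta(\theta_i^*)]}{I_N[B_\delta(\theta_\ell^*)]/I_N[B_\delta(\theta_i^*)]} \;\underset{N\to\infty}{\longrightarrow}\; 0.
\]
Claim (i) follows at once from $\widehat{P}_N(V_\delta^c) \leq I_N[V_\delta^c]/I_N[B_\delta(\theta_\ell^*)]$.

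For (ii), disjointness of the balls yields $I_N[\Xi] = \sum_{j=1}^K I_N[B_\delta(\theta_j^*)] + I_N[V_\delta^c]$, so dividing numerator and denominator of $\widehat{P}_N(B_\delta(\theta_i^*)) = I_N[B_\delta(\theta_i^*)]/I_N[\Xi]$ by $I_N[B_\delta(\theta_i^*)]$ gives
\[
\widehat{P}_N(B_\delta(\theta_i^*)) \;=\; \frac{1}{1 \;+\; \sum_{j \neq i} \dfrac{I_N[B_\delta(\theta_j^*)]}{I_N[B_\delta(\theta_i^*)]} \;+\; \dfrac{I_N[V_\delta^c]}{I_N[B_\delta(\theta_i^*)]}}.
\]
By the upgraded (A) the last denominator term is $o(1)$ as $N \to \infty$, while (B) sandwiches each middle ratio asymptotically between $C_{1,i,j}(\delta)$ and $C_{2,i,j}(\delta)$. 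Taking $\liminf_N$ and $\limsup_N$ first and then letting $\delta \to 0$, both bounds collapse by $C_{1,i,j}(\delta), C_{2,i,j}(\delta) \to c_{i,j}$ onto $1/(1 + \sum_{j \neq i} c_{i,j}) = w_i$. The fact that $\sum_i w_i = 1$ is then automatic, since $\widehat{P}_N(V_\delta) + \widehat{P}_N(V_\delta^c) = 1$ for every $N$.

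The main obstacle is the careful handling of this iterated limit: $\lim_N$ and $\lim_\delta$ do not commute a priori, so the argument must be phrased in terms of $\liminf$/$\limsup$ in $N$ (which absorb the $o(1)$ term and the sandwich from (B)) followed by $\delta \to 0$ to collapse the bounds onto $c_{i,j}$. Minor additional care is required in the final continuity step, where the choice of $\delta$ must depend on the fixed test function $f$ and $\varepsilon$ rather than on $N$.
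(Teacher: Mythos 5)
Your proposal is correct and follows essentially the same route as the paper's proof: split $\Xi$ into disjoint small balls around the $\theta_i^*$ plus the complement, kill the complement via (A), and use the sandwich from (B) with $\liminf/\limsup$ in $N$ before letting $\delta\to0$ and then invoking continuity of the test function. The only (cosmetic) difference is that you first establish convergence of the ball masses $\widehat{P}_N(B_\delta(\theta_i^*))$ and of $\widehat{P}_N(V_\delta^c)$ and then integrate a test function, whereas the paper bounds $\widehat{P}_N(g)$ directly throughout; your ``upgrade'' of (A) to all indices is valid but not needed in the paper, which sidesteps it with the bound $I_N[F_{\delta,K}]/\sum_j I_N[B_\delta(\theta_j^*)]\le I_N[F_{\delta,K}]/I_N[B_\delta(\theta_i^*)]$ for the single index supplied by (A).
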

%
 
\begin{remark}
\label{rmk_conditionA_prime}
One can easily check that a sufficient condition which does not depend on the probability measure $\mu$ implying  condition $(A)$ in Key Lemma~\ref{th:main} is the following:
\begin{itemize}
    \item[(A')] There exist $\theta^*_1,\ldots, \theta^*_K\in \Xi\cap{\rm supp}(\mu)$ such that for every $\delta_1>0$ sufficiently small there exists $0<\delta_2<\delta_1$ and $1\leq i\leq K$ such that
    $$
    \frac{\sup_{\theta\in \left(\bigcup_{j=1}^K B_{\delta_1}(\theta^*_j) \right)^c  }m_N(\theta)}{\inf_{\theta\in B_{\delta_2}(\theta^*_i)}m_N(\theta)}\underset{N\to \infty}{\longrightarrow} 0.
    $$
\end{itemize}
\end{remark}

For any continuous function $g:\Xi\to\bR$, we define 
$$
\widehat{P}_N(g):=\frac{\mathbf{E}_{\mu}(g m_N)}{\mathbf{E}_{\mu}(m_N)}\;.
$$
We shall also write, for any continuous function $g:\Xi\to\bR$ and any measurable set $V\subset\Xi$
\[
I_N (g, V):=\int_{V} g(\theta) m_N(\theta)\mu(d\theta)\;.
\]
When $g\equiv 1$, we have $I_N(1,V)=I_N[V]$. 
\begin{proof}[{\bf Proof of {Lemma} \ref{th:main}}]
We need to show that for any continuous function $g:\Xi\to\bR$, we have
$$
\widehat{P}_N(g)\underset{N\to\infty}{\longrightarrow} \sum_{i=1}^Kg(\theta^*_i)w_i.
$$
First, notice that for any $\delta>0$ sufficiently small, we can decompose $\widehat{P}_N(g)$ as
$$
\widehat{P}_N(g)=\frac{\sum_{i=1}^KI_N (g,B_\delta(\theta^*_i))+I_N (g,F_{\delta,K})}{\sum_{j=1}^KI_N [B_\delta(\theta^*_j)]+I_N[F_{\delta,K}]}\;.
$$
where $F_{\delta,K}=\left(\bigcup_{i=1}^K B_{\delta}(\theta^*_i)\right)^c$. Now by continuity of $g$, given $\varepsilon>0$, there exists $\delta>0$ small enough such that for each $1\leq i\leq K$ and all $\theta \in B_\delta(\theta^*_i)$,
$$
g(\theta^*_i)-\varepsilon <g(\theta)<g(\theta^*_i)+\varepsilon\;,
$$
which implies that $(g(\theta^*_i)-\varepsilon)I_N [B_\delta(\theta^*_i)]\leq I_N (g, B_\delta(\theta^*_i))\leq (g(\theta^*)+\varepsilon)I_N [B_\delta(\theta^*_i)]$ for all  $1\leq i\leq K.$ Hence, it follows that
$$
\widehat{P}_N(g)\geq \frac{\sum_{i=1}^K(g(\theta_i^*)-\varepsilon)I_N [B_\delta(\theta^*_i)]+\inf_{\theta\in F_{\delta,K}}g(\theta)\;I^N[F_{\delta,K}]}{\sum_{j=1}^KI_N [B_\delta(\theta^*_j)]+I_N[F_{\delta,K}]}\;,
$$
and
$$
\widehat{P}_N(g)\leq \frac{\sum_{i=1}^K(g(\theta_i^*)+\varepsilon)I_N [B_\delta(\theta^*_i)]+\sup_{\theta\in F_{\delta,K}}g(\theta)\;I^N[F_{\delta,K}]}{\sum_{j=1}^KI_N [B_\delta(\theta^*_j)]+I_N[F_{\delta,K}]}\;.
$$
Note that, Condition (A) implies that
$$
\frac{I_N[F_{\delta,K}]}{\sum_{j=1}^KI_N [B_\delta(\theta^*_j)]+I_N[F_{\delta,K}]}\leq \frac{I_N[F_{\delta,K}]}{\sum_{j=1}^KI_N [B_\delta(\theta^*_j)]}\underset{N\to\infty}{\longrightarrow} 0\;,
$$
and Condition (B) implies that
\begin{align*}
\frac{1}{1+\sum_{j=1:j\neq i}^KC_{2,i,j}(\delta)}&\leq \liminf_{N\to\infty} \frac{I_N [B_\delta(\theta^*_i)]}{\sum_{j=1}^KI_N [B_\delta(\theta^*_j)]}\\   &\leq \limsup_{N\to\infty}\frac{I_N [B_\delta(\theta^*_i)]}{\sum_{j=1}^KI_N [B_\delta(\theta^*_j)]}\leq \frac{1}{1+\sum_{j=1:j\neq i}^KC_{1,i,j}(\delta)}\;. 
\end{align*}
Thus, it follows that for any $\varepsilon>0$ and all $\delta>0$ small enough,
\begin{align*}
\sum_{i=1}^K\frac{g(\theta_i^*)-\varepsilon}{1+\sum_{j=1:j\neq i}^KC_{2,i,j}(\delta)}
& \leq \liminf_{N\to\infty}\widehat{P}_N(g)\\
&\leq\limsup_{N\to\infty}\widehat{P}_N(g)\leq  \sum_{i=1}^K\frac{g(\theta_i^*)+\varepsilon}{1+\sum_{j=1:j\neq i}^KC_{1,i,j}(\delta)}\;.
\end{align*}
By Condition (B), we can take $\delta\to 0$ to conclude that for any $\varepsilon>0$,
$$
\sum_{i=1}^K(g(\theta_i^*)-\varepsilon)w_i
 \leq \liminf_{N\to\infty}\widehat{P}_N(g)\\
\leq\limsup_{N\to\infty}\widehat{P}_N(g)\leq  \sum_{i=1}^K(g(\theta_i^*)+\varepsilon)w_i\;.
$$
By taking $\varepsilon\to 0$, the result follows.
\end{proof}

\subsubsection{Proofs of the theorems on emerging dominance}

\medskip

\thmFinite*
\begin{proof}[{\bf Proof of Theorem \ref{theo:discrete}}]
We need to show that conditions (1) and (2) imply conditions (A) and (B) of Lemma~\ref{th:main}. To that end, observe that for any $\delta>0$ small enough
$$
\frac{I_N\left[F_{\delta,K}\right]}{I_N\left[B_{\delta}(\theta^*_i)\right]}=\frac{I_N\left[F_{\delta,K}\right]}{m_N(\theta^*_i)\mu(\theta^*_i)}=\sum_{\theta\in F_{\delta,K}\cap\text{Supp}(\mu)}\frac{m_N(\theta)\mu(\theta)}{m_N(\theta^*_i)\mu(\theta^*_i)}\;,
$$
where $F_{\delta,K}=\left(\bigcup_{i=1}^K B_{\delta}(\theta^*_i)\right)^c$. Since by Condition (1) each term of the finite sum above goes to $0$ as $N\to\infty$, it follows that  $\frac{I_N\left[F_{\delta,K}\right]}{I_N\left[B_{\delta}(\theta^*_i)\right]}\to 0$ as $N\to\infty$ and Condition $(A)$ holds.

It remains to show that Condition $(2)$ implies $(B)$. To see this, fix $1\leq i,j\leq K$ with $i\neq j$ and take $\delta>0$ small enough in such a way
$$
\frac{I_N\left[B_{\delta}(\theta^*_j)\right]}{I_N\left[ B_{\delta}(\theta^*_i)\right]}=\frac{\mu(\theta^*_j)m_N(\theta^*_j)}{\mu(\theta^*_i)m_N(\theta^*_i)}\;.
$$
Hence, by Condition $(2)$ for all $\delta>0$ small enough
$$
\lim_{N\to\infty }\frac{I_N\left[B_{\delta}(\theta^*_j)\right]}{I_N\left[ B_{\delta}(\theta^*_i)\right]}=\frac{\mu(\theta^*_j)}{\mu(\theta^*_i)}d_{i,j}\;.
$$
In this case, condition (B) holds with  $C_{1,i,j}(\delta)=C_{2,i,j}(\delta)=\frac{\mu(\theta^*_j)}{\mu(\theta^*_i)}d_{i,j}$ for all $\delta>0$ small enough and the result follows.
\end{proof}

\medskip

\thmUnique*
\begin{proof}[{\bf Proof of Theorem \ref{theo:unique}}]
By {Key Lemma~\ref{th:main}}, it suffices to show that
for any open neighborhood  $V\subset\Xi$ of $\theta^*$ we have
\begin{equation}
\label{eq:ration_to_zero_general_case}
\frac{I_N [V^c]}{I_N [V]}\underset{ N\to\infty}{\longrightarrow} 0\;. 
\end{equation}

 Observe that Assumption \ref{Ass:1} implies that $h_N,N\ge1$ are continuous, and by uniform convergence, we conclude that $h$ itself is continuous.

To show \ref{eq:ration_to_zero_general_case}, let $\tilde{\theta}\in \arg\max_{\theta\in V^c} h(\theta)$ (the maximum exists since $h$ is continuous, $V^c$ is closed and $\Xi$ compact) and observe that 
$h(\tilde{\theta})<h(\theta^*)$, by the definition of $\theta^*$. Thus, let $\delta>0$ be such that
$$
h(\tilde{\theta})+\delta<h(\theta^*)-2\delta\;.
$$
Since $h_N$ converges uniformly to $h$, it follows that for all $N$ sufficiently large 
$$
I_N [V^c]\leq \mu(V^c) e^{N\sup_{\theta\in V^c}h_N(\theta)}\leq \mu(V^c)e^{N({h}(\tilde{\theta})+\delta)}\;.
$$
Similarly, we also have for all $N$ sufficiently large, 
$$
I_N [V]\geq \int_{V}e^{N({h}(\theta)-\delta)}\mu(d\theta)\;.
$$
Finally, since ${h}$ is continuous at $\theta^*$, there exists a neighborhood  $V'\subseteq V$ of $\theta^*$ such that  ${h}(\theta)>{h}(\theta^*)-\delta$ for all $\theta\in V'$, which implies that
$$
I_N [V]\geq e^{N({h}(\theta^*)-2\delta)} \mu(V')\;.
$$
Since $\theta^*\in\text{supp}(\mu)$, we have that $\mu(V')>0$.
As a consequence, for all $N$ sufficiently  large, we have that     
\begin{equation*}
\label{eq:ration_to_zero}
\frac{I_N [V^c]}{I_N [V]}\leq \frac{e^{N({h}(\tilde{\theta})+\delta)}\mu(V^c)}{e^{N({h}(\theta^*)-2\delta)} \mu(V')}\;\underset{N\to\infty}{\longrightarrow} 0 \;,  
\end{equation*}
and the result follows.
\end{proof}

\medskip

\remdouble*
\begin{proof}[ {\bf Proof of Remark \ref{cor:1d_K2}}]

The exact same proof of Theorem \ref{theo:unique} ensures Condition (A) of {Key Lemma~\ref{th:main}} holds with either $i=1$ or $i=2$. 

To conclude the proof, we need to show that Condition (B) of 
{Key Lemma~\ref{th:main}} holds with $c_{1,2}=(e^{d_2}g_{\mu}(\theta^*_2)\sqrt{-h''(\theta^*_1)})/(e^{d_1}g_{\mu}(\theta^*_1)\sqrt{-h''(\theta^*_2)}).$ To that end, take $\delta>0$ small enough and observe  that by the Mean Valued Theorem, 
\begin{equation}
\label{ineq_0_proof_cor_3}
\frac{\int_{\theta^*_2-\delta}^{\theta^*_2+\delta}m_N(\theta)g_{\mu}(\theta)d\theta}{\int_{\theta^*_1-\delta}^{\theta^*_1+\delta}m_N(\theta)g_{\mu}(\theta)d\theta}=\frac{g_{\mu}(\xi_2)\int_{\theta^*_2-\delta}^{\theta^*_2+\delta}e^{Nh_N(\theta)}d\theta}{g_{\mu}(\xi_1)\int_{\theta^*_1-\delta}^{\theta^*_1+\delta}e^{Nh_N(\theta)}d\theta}\;,    
\end{equation}
where,  $\theta^*_1-\delta<\xi_1<\theta^*_1+\delta$ and $\theta^*_2-\delta<\xi_2<\theta^*_2+\delta$. 

Since $h''(\theta^*_i)<0$ for each $1\leq i\leq 2$, one can take $\varepsilon>0$ such that $h''(\theta^*_i)+2\varepsilon<0$, for both $i=1$ and $i=2$. 
By the continuity of $h''$, one can find $\delta$ sufficiently small such that 
\begin{equation}
\label{ineq_1_proof_cor_3}
 h''(\theta^*_i)-\varepsilon\leq h''(\theta)\leq h''(\theta^*_i)+\varepsilon,   
\end{equation}
for all $\theta\in [\theta^*_i-\delta,\theta^*_i+\delta]$ and both $i=1$ and $i=2$.

The uniform convergence of $h''_N$ to $h''$ around $\theta^*_i$ and \eqref{ineq_1_proof_cor_3} ensure that there exists $N_0=N_0(\delta,\varepsilon)$ such that for all $N\geq N_0$, 
\begin{equation}
\label{ineq_2_proof_cor_3}
h''(\theta^*_i)-2\varepsilon \leq h''_N(\theta)\leq h''(\theta^*_i)+2\varepsilon,    
\end{equation}
for all $\theta\in [\theta^*_i-\delta,\theta^*_i-\delta]$ and $1\leq i\leq 2$.

Now, since $h_N\in C^2([a,b])$, for each $1\leq i\leq 2$, we can expand $h_N$ around $\theta^*_i$ up to order 2 to deduce that 
for all $\theta\in [\theta^*_i-\delta,\theta^*_i-\delta]$, 
$$
h_N(\theta)=h_N(\theta^*_i)+h'_N(\theta^*_i)(\theta-\theta^*_i)+h_N''(\zeta_{i,\theta})(\theta-\theta^*_i)^2/2\;,
$$
where $\zeta_{i,\theta}$ is between $\theta$ and $\theta^*_i$. By using that for all $N$ sufficiently large $|h'_N(\theta^*_i)|\leq C/N$ and \eqref{ineq_2_proof_cor_3}, we can suppose (by increasing $C$ if necessary) that for all $N\geq N_0$, $\theta\in [\theta^*_i-\delta,\theta^*_i-\delta]$ and $1\leq i\leq 2,$ we have that
\begin{align}
\label{ineq_3_proof_cor_3}
-(C\delta)/N+(h''(\theta^*_i)-2\varepsilon)(\theta-\theta^*_i)^2/2&\leq h_N(\theta)-h_N(\theta^*_i) \nonumber\\
\leq & (C\delta)/N+(h''(\theta^*_i)+2\varepsilon)(\theta-\theta^*_i)^2/2.
\end{align}
By observing that $N(h_N(\theta^*_i)-h(\theta^*_i))\to d_i$ and by increasing $N_0$ if necessary, we deduce from \eqref{ineq_3_proof_cor_3} that the following inequalities hold for all $N\geq N_0$, $\theta\in [\theta^*_i-\delta,\theta^*_i-\delta]$ and $1\leq i\leq 2:$  
\begin{equation}\label{eq:1.}
N(h_N(\theta)-h(\theta^*_i))\leq d_i+\varepsilon+ C\delta+N(h''(\theta^*_i)+2\varepsilon)(\theta-\theta^*_i)^2/2.
\end{equation}
and
\begin{equation}\label{eq:2.}
N(h_N(\theta)-h(\theta^*_i))\geq d_i-\varepsilon- C\delta+N(h''(\theta^*_i)-2\varepsilon)(\theta-\theta^*_i)^2/2.
\end{equation}
Hence, we deduce from the inequalities above that for all $N\geq N_0$ (recall that $m_N(\theta)=e^{Nh_N(\theta)}$ and that $h(\theta^*_1)=h(\theta^*_2)$ by assumption), it holds
\begin{align}\label{eq:*}
\frac{\int_{\theta^*_2-\delta}^{\theta^*_2+\delta}m_N(\theta)d\theta}{\int_{\theta^*_1-\delta}^{\theta^*_1+\delta}m_N(\theta)d\theta}&\leq \frac{e^{(d_2+\varepsilon+\delta C)}\int_{\theta^*_2-\delta}^{\theta^*_2+\delta}e^{N(h''(\theta^*_2)+2\varepsilon)(\theta-\theta^*_2)^2/2}d\theta}{e^{(d_1-\varepsilon-\delta C)}\int_{\theta^*_1-\delta}^{\theta^*_1+\delta}e^{N(h''(\theta^*_1)-2\varepsilon)(\theta-\theta^*_1)^2/2}d\theta}\;.
\end{align}
By making the change of variables $x=\sqrt{N}(\theta-\theta^*_i)$ in the right-most integrals above, we obtain
\begin{equation}
\label{ineq_4_proof_cor_3}
\frac{\int_{\theta^*_2-\delta}^{\theta^*_2+\delta}e^{N(h''(\theta^*_2)+2\varepsilon)(\theta-\theta^*_2)^2/2}d\theta}{\int_{\theta^*_1-\delta}^{\theta^*_1+\delta}e^{N(h''(\theta^*_1)-2\varepsilon)(\theta-\theta^*_1)^2/2}d\theta}=\frac{\int_{-\delta\sqrt{N}}^{\delta\sqrt{N}}e^{(h''(\theta^*_2)+2\varepsilon)x^2/2}dx}{\int_{-\delta\sqrt{N}}^{\delta\sqrt{N}}e^{N(h''(\theta^*_1)-2\varepsilon)x^2/2}dx}\;.    
\end{equation}
Therefore, from \eqref{ineq_0_proof_cor_3}, \eqref{eq:*} and \eqref{ineq_4_proof_cor_3}, we have that for all $N\geq N_0$, 
$$\frac{\int_{\theta^*_2-\delta}^{\theta^*_2+\delta}m_N(\theta)g_{\mu}(\theta)d\theta}{\int_{\theta^*_1-\delta}^{\theta^*_1+\delta}m_N(\theta)g_{\mu}(\theta)d\theta}\leq \frac{g_{\mu}(\xi_2)e^{(d_2+\varepsilon+\delta C)}\int_{-\delta\sqrt{N}}^{\delta\sqrt{N}}e^{(h''(\theta^*_2)+2\varepsilon)x^2/2}dx}{g_{\mu}(\xi_1)e^{(d_1-\varepsilon-\delta C)}\int_{-\delta\sqrt{N}}^{\delta\sqrt{N}}e^{N(h''(\theta^*_1)-2\varepsilon)x^2/2}dx}\;,
$$
where, $\theta^*_1-\delta<\xi_1<\theta^*_1+\delta$ and $\theta^*_2-\delta<\xi_2<\theta^*_2+\delta$.
In particular, it follows that 
$$
\limsup_{N\to\infty}\frac{\int_{\theta^*_2-\delta}^{\theta^*_2+\delta}m_N(\theta)g_{\mu}(\theta)d\theta}{\int_{\theta^*_1-\delta}^{\theta^*_1+\delta}m_N(\theta)g_{\mu}(\theta)d\theta}\leq \frac{g_{\mu}(\xi_2)e^{(d_2+\varepsilon+\delta C)}}{g_{\mu}(\xi_1)e^{(d_1-\varepsilon-\delta C)}}\sqrt{\frac{2\varepsilon-h''(\theta^*_1)}{-2\varepsilon-h''(\theta^*_2)}}.
$$
By similar arguments, one can also show that 
$$
\liminf_{N\to\infty}\frac{\int_{\theta^*_2-\delta}^{\theta^*_2+\delta}m_N(\theta)g_{\mu}(\theta)d\theta}{\int_{\theta^*_1-\delta}^{\theta^*_1+\delta}m_N(\theta)g_{\mu}(\theta)d\theta}\geq \frac{g_{\mu}(\xi_2)e^{(d_2-\varepsilon-\delta C)}}{g_{\mu}(\xi_1)e^{(d_1+\varepsilon+\delta C)}}\sqrt{\frac{-2\varepsilon-h''(\theta^*_1)}{2\varepsilon-h''(\theta^*_2)}}.
$$
By taking first $\delta\to 0$ and using the continuity of $g_{\mu}$, and then $\varepsilon\to 0$, the result follows.
\end{proof}

\medskip

\thmdouble*

\begin{proof}[{\bf Proof of Theorem \ref{theo:twodominance}}]

 First notice that by following the proof of Theorem \ref{theo:unique}, one can check that Condition (A) of {Key Lemma~\ref{th:main}} holds with either $i=1$ or $i=2$. 
 
To conclude the proof, it remains to show that Condition (B) of 
{Key Lemma~\ref{th:main}} holds with $c_{1,2}=(-e^{d_2}g_{\mu}(\theta^*_2)h'(\theta^*_1))/(e^{d_1}g_{\mu}(\theta^*_1)h'(\theta^*_2)).$ The proof of this fact is very similar to that of Remark \ref{cor:1d_K2}.

In what follows, for any $\delta>0$, denote $V_1(\delta)=(\theta^*_1,\theta^*_1+\delta)$, $\bar{V}_1(\delta)=[\theta^*_1,\theta^*_1+\delta]$, $V_2(\delta)=(\theta^*_2-\delta,\theta^*_2)$ and $\bar{V}_2(\delta)=[\theta^*_2-\delta,\theta^*_2]$.   

Take $\delta>0$ small enough and apply the Mean Valued Theorem to conclude that 
\begin{equation}
\label{ineq_0_proof_thm_3}
\frac{\int_{\theta^*_2-\delta}^{\theta^*_2}m_N(\theta)g_{\mu}(\theta)d\theta}{\int_{\theta^*_1}^{\theta^*_1+\delta}m_N(\theta)g_{\mu}(\theta)d\theta}=\frac{g_{\mu}(\xi_2)\int_{\theta^*_2-\delta}^{\theta^*_2}e^{Nh_N(\theta)}d\theta}{g_{\mu}(\xi_1)\int_{\theta^*_1}^{\theta^*_1+\delta}e^{Nh_N(\theta)}d\theta}\;,    
\end{equation}
where  $\xi_i\in V_i(\delta)$ for $1\leq i\leq 2$. 

Since $h'(\theta^*_1)<0$ and $h'(\theta^*_2)>0$, one can take $\varepsilon>0$ such that $h'(\theta^*_1)+2\varepsilon<0$ and $h'(\theta^*_2)-2\varepsilon>0$. 
By the continuity of $h'$, one can find $\delta$ sufficiently small such that 
\begin{equation}
\label{ineq_1_proof_thm_3}
 h'(\theta^*_i)-\varepsilon\leq h'(\theta)\leq h'(\theta^*_i)+\varepsilon,   \end{equation}
for all $\theta\in \bar{V}_i(\delta)$ and both $i=1$ and $i=2$.

The uniform convergence of $h'_N$ to $h'$ around $\theta^*_i$ and \eqref{ineq_1_proof_thm_3} ensure that there exists $N_0=N_0(\delta,\varepsilon)$ such that for all $N\geq N_0$, 
\begin{equation}
\label{ineq_2_proof_thm_3}
h'(\theta^*_i)-2\varepsilon \leq h'_N(\theta)\leq h'(\theta^*_i)+2\varepsilon,    
\end{equation}
for all $\theta\in \bar{V}_i(\delta)$ and $1\leq i\leq 2$.

Now, since $h_N\in C^1([a,b])$, for each $1\leq i\leq 2$, we can expand $h_N$ around $\theta^*_i$ up to order 1 to deduce that 
for all $\theta\in \bar{V}_i(\delta)$, 
$$
h_N(\theta)=h_N(\theta^*_i)+h'_N(\zeta_{i,\theta})(\theta-\theta^*_i)\;,
$$
where $\zeta_{i,\theta}$ is between $\theta$ and $\theta^*_i$.

It follows from \eqref{ineq_1_proof_thm_3} and \eqref{ineq_2_proof_thm_3} that for all $\theta\in\bar{V}_1(\delta)$ and $N\geq N_0$, it holds
\begin{align}
\label{ineq_3_proof_thm_3}
N(h'(\theta^*_1)-2\varepsilon)(\theta-\theta^*_1)\leq N(h_N(\theta)-h_N(\theta^*_1))
\leq N(h'(\theta^*_1)+2\varepsilon)(\theta-\theta^*_1).
\end{align} 
Similarly, for all $\theta\in\bar{V}_2(\delta)$ and $N\geq N_0$, it holds that
\begin{align}
\label{ineq_4_proof_thm_3}
N(h'(\theta^*_2)+2\varepsilon)(\theta-\theta^*_2)\leq N(h_N(\theta)-h_N(\theta^*_2))
\leq N(h'(\theta^*_2)-2\varepsilon)(\theta-\theta^*_2).
\end{align} 

By using the fact that $N(h_N(\theta^*_i)-h(\theta_i^*))\to d_i$ together with  inequalities \eqref{ineq_3_proof_thm_3} and \eqref{ineq_4_proof_thm_3}, we deduce that for all $N\geq N_0$ and any $\theta\in\bar{V_1}(\delta)$,
\begin{align}
\label{ineq_5_proof_thm_3}
d_1-\varepsilon+N(h'(\theta^*_1)-2\varepsilon)(\theta-\theta^*_1)&\leq N(h_N(\theta)-h(\theta^*_1)) \nonumber \\
&\leq d_1+\varepsilon +N(h'(\theta^*_1)+2\varepsilon)(\theta-\theta^*_1).
\end{align} 
and similarly, for all $N\geq N_0$ and any $\theta\in\bar{V_2}(\delta),$
\begin{align}
\label{ineq_6_proof_thm_3}
d_2-\varepsilon+N(h'(\theta^*_2)+2\varepsilon)(\theta-\theta^*_2)&\leq N(h_N(\theta)-h(\theta^*_2)) \nonumber \\
&\leq d_2+\varepsilon +N(h'(\theta^*_2)-2\varepsilon)(\theta-\theta^*_2).
\end{align}

Therefore, we deduce from the inequalities above that for all $N\geq N_0$ (recall that $m_N(\theta)=e^{Nh_N(\theta)}$ and that $h(\theta^*_1)=h(\theta^*_2)$ by assumption), it holds
\begin{align}\label{eq:*_thm_3}
\frac{\int_{\theta^*_2-\delta}^{\theta^*_2}m_N(\theta)d\theta}{\int_{\theta^*_1}^{\theta^*_1+\delta}m_N(\theta)d\theta}&\leq \frac{e^{(d_2+\varepsilon)}\int_{\theta^*_2-\delta}^{\theta^*_2}e^{N(h'(\theta^*_2)-2\varepsilon)(\theta-\theta^*_2)}d\theta}{e^{(d_1-\varepsilon)}\int_{\theta^*_1}^{\theta^*_1+\delta}e^{N(h'(\theta^*_1)-2\varepsilon)(\theta-\theta^*_1)}d\theta}\;.
\end{align}
By making the change of variables $x=-N(\theta-\theta^*_2)$ (respectively $x=N(\theta-\theta^*_1)$) in the integral appearing in the numerator (respectively denominator) of the right-most ratio, we then obtain
\begin{equation}
\label{ineq_7_proof_thm_3}
\frac{\int_{\theta^*_2-\delta}^{\theta^*_2}e^{N(h'(\theta^*_2)-2\varepsilon)(\theta-\theta^*_2)}d\theta}{\int_{\theta^*_1}^{\theta^*_1+\delta}e^{N(h'(\theta^*_1)-2\varepsilon)(\theta-\theta^*_1)}d\theta}=\frac{\int_{0}^{\delta N}e^{-(h'(\theta^*_2)-2\varepsilon)x }dx}{\int_{0}^{\delta N}e^{-(2\varepsilon-h'(\theta^*_1))x}dx}\;.    
\end{equation}

Therefore, from \eqref{ineq_0_proof_thm_3}, \eqref{eq:*_thm_3} and \eqref{ineq_7_proof_thm_3}, we have that for all $N\geq N_0$,
$$\frac{\int_{\theta^*_2-\delta}^{\theta^*_2}m_N(\theta)g_{\mu}(\theta)d\theta}{\int_{\theta^*_1}^{\theta^*_1+\delta}m_N(\theta)g_{\mu}(\theta)d\theta}\leq \frac{g_{\mu}(\xi_2)e^{(d_2+\varepsilon)}\int_{0}^{\delta N}e^{-(h'(\theta^*_2)-2\varepsilon)x}dx}{g_{\mu}(\xi_1)e^{(d_1-\varepsilon)}\int_{0}^{\delta N}e^{-(2\varepsilon-h'(\theta^*_1))x}dx}\;,
$$
where $\xi_1\in V_i(\delta)$ for $1\leq i\leq 2.$
As a consequence, it follows that 
$$
\limsup_{N\to\infty}\frac{\int_{\theta^*_2-\delta}^{\theta^*_2+\delta}m_N(\theta)g_{\mu}(\theta)d\theta}{\int_{\theta^*_1-\delta}^{\theta^*_1+\delta}m_N(\theta)g_{\mu}(\theta)d\theta}\leq \frac{g_{\mu}(\xi_2)e^{(d_2+\varepsilon)}}{g_{\mu}(\xi_1)e^{(d_1-\varepsilon)}}\frac{2\varepsilon-h'(\theta^*_1)}{h'(\theta^*_2)-2\varepsilon}.
$$
By similar arguments, one can also deduce that 
$$
\liminf_{N\to\infty}\frac{\int_{\theta^*_2-\delta}^{\theta^*_2}m_N(\theta)g_{\mu}(\theta)d\theta}{\int_{\theta^*_1}^{\theta^*_1+\delta}m_N(\theta)g_{\mu}(\theta)d\theta}\geq \frac{g_{\mu}(\xi_2)e^{(d_2-\varepsilon)}}{g_{\mu}(\xi_1)e^{(d_1+\varepsilon)}}\frac{(-2\varepsilon-h'(\theta^*_1))}{2\varepsilon+h'(\theta^*_2)}.
$$
By taking first $\delta\to 0$ and using the continuity of $g_{\mu}$, and then $\varepsilon\to 0$, we conclude the proof.
\end{proof}

\subsection{Expectation of hitting times}\label{sec:proofs_expectations}

 We give below the computations of the expected hitting  time  for the Single-well and Alternating-wells random walk presented in Section~\ref{sec:sw}  and Section~\ref{sec:aw}, respectively.

\medskip
\propSW*

\begin{proof}[{\bf Proof of Proposition~\ref{Prop:exp_of_tau_SW}}]
We have $\mathcal X_N=[-N,N]\cap \bZ$ and let $\tau_{\{-N,N\}}$ the time to hit the target set $\{-N,N\}$. 
We shall prove only the case  $\theta\neq \{1/2,{\color{blue} 1}\}$. The case $\theta=1/2$ is the classical Gambler's ruin on $[-N,N]\cap \bZ$ starting from $0$ and the case $\theta=1$ is immediate. 
%
%
Also, given a set $A$, we denote by  $\tau_{A}$ the corresponding hitting time.

By the Strong Markov property and the symmetry, we have
\begin{align*}
\bE_{0, \delta_\theta}\left[\tau_{\{-N,N\}}\right]&=1+\theta\bE_{-1, \delta_\theta}\left[\tau_{\{-N,N\}}\right]+(1-\theta)\bE_{1,\delta_\theta}\left[\tau_{\{-N,N\}}\right]\\
&=1+\bE_{1,\delta_\theta}\left[\tau_{\{-N,N\}}\right]\;.
\end{align*}
On the other hand, by the strong Markov property, we also get
\begin{align*}
\bE_{1,\delta_\theta}\left[\tau_{\{-N,N\}}\right]=\bE_{1,\delta_\theta}\left[\tau_{\{0,N\}}\right]+\bP_{1,\delta_\theta}(\tau_{\{0\}}<\tau_{\{N\}})\bE_{0,\delta_\theta}\left[\tau_{\{-N,N\}}\right]\;,
\end{align*}
and thus
\begin{align*}
\bE_{0,\delta_\theta}\left[\tau_{\{-N,N\}}\right]&=1+\bE_{1,\delta_\theta}\left[\tau_{\{0,N\}}\right]
+\bP_{1,\delta_\theta}(\tau_{\{0\}}<\tau_{\{N\}})\bE_{0,\delta_\theta}\left[\tau_{\{-N,N\}}\right]\;.
\end{align*}
Solving the above equation for $\bE_{0,\delta_\theta}\left[\tau_{\{-N,N\}}\right]$, we obtain that
\begin{equation}
\label{meain_eq_for_Exp_tau_N_p}
\bE_{0,\delta_\theta}\left[\tau_{\{-N,N\}}\right]=\frac{1+\bE_{1,\delta_\theta}\left[\tau_{\{0,N\}}\right]}{\bP_{1,\delta_\theta}(\tau_{\{0\}}\geq \tau_{\{N\}})}\;.
\end{equation}

Finally, since the dynamics of the Single-well random walk on $\{1,\ldots, N\}$ is the same as the classical Gambler's Ruin random walk on $\{0,\ldots, N\}$, we obtain  that
$$
\bE_{1,\delta_\theta}\left[\tau_{\{0,N\}}\right]=\frac{1}{1-2\theta}-\frac{N}{\theta}\frac{1}{(((1-\theta)/\theta)^N-1)}\;,
$$
and that
$$
\bP_{1,\delta_\theta}(\tau_{\{0\}}>\tau_{\{N\}})=\frac{1-2\theta}{\theta(((1-\theta)/\theta)^N-1)}\;.
$$
By plugging these expressions in \eqref{meain_eq_for_Exp_tau_N_p}, the result follows.

\end{proof}


\medskip
\propAW*

\begin{proof}[{\bf Proof of Proposition~\ref{Prop:esp__of_tau_alternating_wells}}]

To facilitate the exposition,  in this  proof we shall carry the dependence on the state $\theta$ and an initial state $i$ in the random variable $\tau$, rather than in the probability/expectation. Thus,  given $i\in \mathcal X_N$, and a set  $A\subset \mathcal X_N$, we denote by  $\tau^{i,\theta}_{A}$ the first time the random walk  with state $\theta$ hits the set $A$, starting from the state $i$, and we write $\bE [\tau^{i,\theta}_{A}]$ and  $\bP (\tau^{i,\theta}_{A}<\tau^{i,\theta}_{B})$ instead of $\bE_{i, \delta_\theta}[\tau_A]$ and   $\bP_{i,\delta_\theta} (\tau_{A}<\tau_{B})$.
Moreover, since the proof also relies upon comparison across  different models, we shall upper indexing $\bP$ and $\bE$ by $\gr$ (gambler ruin), $\sw$ (single-well) and $\aw$ (alternating-wells), to emphasize which model we are referring to.

We have $\mathcal X_N=[-2N,2N]\cap \bZ$ and we need to compute  $\bE^{\aw}[\tau^{0,\theta}_{\{-2N,2N\}}]$.  
The random variable $\tau^{0,\theta}_{\{-2N,2N\}}$ can be decomposed as follows:
\begin{align*}
    \tau^{0,\theta}_{\{-2N,2N\}} &=
    \big( \tau^{0,\theta}_{\{N\}} + T_1 \big)\1_{(\tau^{0,\theta}_{\{N\}}\leq \tau^{0,\theta}_{\{-N\}})} + \big( \tau^{0,\theta}_{\{-N\}} + T_2 \big)\1_{(\tau^{0,\theta}_{\{-N\}}<\tau^{0,\theta}_{\{N\}})}
    \\
    &=  \tau^{0,\theta}_{\{-N,N\}}  + T_1  \1_{(\tau^{0,\theta}_{\{N\}}\leq \tau^{0,\theta}_{\{-N\}})} + T_2 \1_{(\tau^{0,\theta}_{\{-N\}}<\tau^{0,\theta}_{\{N\}})}\;,
\end{align*}
where $T_1$ and $T_2$ are random variables  independent of everything else and have the same distribution as $\tau^{N,\theta}_{\{-2N,2N\}}$ and $\tau^{-N,\theta}_{\{-2N,2N\}}$, respectively. These random variables, in turn, can be decomposed as:
\begin{align*}
    T_1 &= 
    \tau^{N,\theta}_{\{0,2N\}} + \tau' \1_{(\tau^{N,\theta}_{\{0\}})\leq \tau^{N,\theta}_{\{2N\}})}\;,
    \\
    T_2 &=
    \tau^{-N,\theta}_{\{0,-2N\}} + \tau'' \1_{(\tau^{-N,\theta}_{\{0\}})\leq \tau^{-N,\theta}_{\{-2N\}})}\;,
\end{align*}
with  $\tau', \tau''$  random variables with the same distribution as $\tau^{0,\theta}_{\{-2N, 2N\}}$,  independent of everything else. 
Overall,  we can write $\tau^{0,\theta}_{\{-2N,2N\}}$ as  
\begin{align*}
   \tau^{0,\theta}_{\{-2N, 2N\}} &= \tau^{0,\theta}_{\{-N, N\}}  + \tau^{N,\theta}_{\{0, 2N\}}  \1_{(\tau^{0,\theta}_{\{N\}} \leq \tau^{0,\theta}_{\{-N\}})}
 +  \tau^{-N,\theta}_{\{0, -2N\}}
    \1_{(\tau^{0,\theta}_{\{-N\}}< \tau^{0,\theta}_{\{N\}})}
    \\ \nonumber
    &
    +\tau' \1_{(\tau^{0,\theta}_{\{N\}}\leq \tau^{0,\theta}_{\{-N\}})} \1_{(\tau^{N,\theta}_{\{0\}}< \tau^{N,\theta}_{\{2N\}})} 
    + \tau'' \1_{(\tau^{0,\theta}_{\{-N\}}< \tau^{0,\theta}_{\{N\}})} \1_{(\tau^{-N,\theta}_{\{0\}}\leq \tau^{-N,\theta}_{\{-2N\}})} 
    \;.
\end{align*}

Using independence between the  random variables,  we obtain that
\begin{align}
\label{eq:decomposition}
\bE^{\aw}(\tau^{0,\theta}_{\{-2N, 2N\}})&= \bE^{\aw}(\tau^{0,\theta}_{\{-N, N\}}) +
\bE^{\aw}(\tau^{N,\theta}_{\{0, 2N\}})
\bP^{\aw}(\tau^{0,\theta}_{\{N\}} \leq \tau^{0,\theta}_{\{-N\}})
\\ \nonumber 
& + \bE^{\aw}(\tau^{-N,\theta}_{\{0, -2N\}})
\bP^{\aw}(\tau^{0,\theta}_{\{-N\}}< \tau^{0,\theta}_{\{N\}})
\\ \nonumber 
&+ \bE^{\aw}(\tau^{0,\theta}_{\{-2N, 2N\}}) 
\bP^{\aw}(\tau^{0,\theta}_{\{N\}} \leq \tau^{0,\theta}_{\{-N\}})\bP^{\aw}(\tau^{N,\theta}_{\{0\}} < \tau^{N,\theta}_{\{2N\}}) \\ \nonumber 
&+ \bE^{\aw}(\tau^{0,\theta}_{\{-2N, 2N\}}) 
\bP^{\aw}(\tau^{0,\theta}_{\{-N\}}< \tau^{0,\theta}_{\{N\}})\bP^{\aw}(\tau^{-N,\theta}_{\{0\}} \leq  \tau^{-N,\theta}_{\{-2N\}})\;.
\end{align}

Using the Strong Markov property, we have that:

\begin{itemize}
    \item  $\bE^{\aw}(\tau^{N,\theta}_{\{0, 2N\}})= \bE^{\sw}(\tau^{0,\theta}_{\{-N, N\}})$, where the right-hand side (rhs) is  the expected hitting time of $\{-N, N\}$  for a \sw{} random  walk in a $\theta$-environment starting from $0$  (see, Proposition~\ref{Prop:exp_of_tau_SW});
    \item $\bE^{\aw}(\tau^{-N,\theta}_{\{0, -2N\}})= \bE^{\sw}(\tau^{0,1-\theta}_{\{-N, N\}})$, where the rhs is the  expected  hitting time of $\{-N, N\}$  for a \sw{} random  walk in a $(1-\theta)$-environment starting from $0$; 
    \item $\bE^{\aw}(\tau^{0,\theta}_{\{-N, N\}})= \bE^{\gr}(\tau^{0,1-\theta}_{\{-N, N\}})$, where the rhs is the expected hitting time of $\{-N,N\}$ for a \gr{} random walk starting from $0$ (when it jumps to the right with probability $1-\theta$; note that by symmetry $\bE^{\gr}(\tau^{0,1-\theta}_{\{-N, N\}})=\bE^{\gr}(\tau^{0,\theta}_{\{-N, N\}})$); 
    \item $\bP^{\aw}(\tau^{0,\theta}_{\{N\}} \leq \tau^{0,\theta}_{\{-N\}})=
    \bP^{\gr}(\tau^{0,1-\theta}_{\{N\}} \leq \tau^{0,1-\theta}_{\{-N\}})=\frac{1}{1+\left(\frac{\theta}{1-\theta} \right)^{N}}
    $, where the rhs is  the probability of  a Gambler's ruin hitting  $N$ before $-N$ starting from $0$ (when it jumps to the right with probability $1-\theta$);
    \item $\bP^{\aw}(\tau^{-N,\theta}_{\{0\}} \leq  \tau^{-N,\theta}_{\{-2N\}})
     = \bP^{\sw}(\tau^{0,1-\theta}_{\{N\}} \leq   \tau^{0,1-\theta}_{\{-N\}})
    = 1-\theta$, which can be proved by a standard first-step analysis.
\end{itemize}

Therefore, from~\eqref{eq:decomposition}  we then obtain
\begin{align*}
&\bE^{\aw}(\tau^{0,\theta}_{\{-2N, 2N\}})
=\\
&\frac{1}{\frac{\theta \left(\frac{\theta}{1-\theta} \right)^{N} + 1-\theta}{1+\left(\frac{\theta}{1-\theta} \right)^{N}} }\left( \bE^{\gr}(\tau^{0,1-\theta}_{\{-N, N\}})  + \frac{\bE^{\sw}(\tau^{0,\theta}_{\{-N, N\}})}{1+\left(\frac{\theta}{1-\theta} \right)^{N}}  + \frac{\bE^{\sw}(\tau^{0,1-\theta}_{\{-N, N\}})\left(\frac{\theta}{1-\theta} \right)^{N}}{1+\left(\frac{\theta}{1-\theta} \right)^{N}}\right)\;.
\end{align*}
Using Proposition~\ref{Prop:exp_of_tau_SW} and~\eqref{eq:timeGR}, the claim follows.

\end{proof}

\subsection{Metastability/cut-off}\label{sec:proofs_unpredict}

In this section, we prove Theorem~\ref{thm:metastabilitySW}, Proposition \ref{thm:metastability_GR} and Theorem~\ref{th:meta_aw}.




\medskip
\thmmetaSW*

\begin{proof}[{\bf Proof of Theorem~\ref{thm:metastabilitySW}}]
Let us start proving item (i). Given a subset $A\subset \mathcal X_N$, let us denote $\tau_{A}$ the first time the single-well random walk  hits the set $A$. 
With this notation, we have  $\tau_{N}=\tau_{\{-N,N\}}$.  Since the proof  relies upon comparison with the gambler ruin, we shall upper indexing $\bP$ and $\bE$ by $\gr$ (gambler ruin) and  $\sw$ (single-well) to emphasize which model we are referring to.

According to Theorem \ref{thm:fernandezetal}, to show item $i)$ it suffices to find $R_N>0$ and $r_N\in (0,1)$ such that $r_N \underset{N\to\infty}{\longrightarrow} 0$, $R_N/\bE^{\sw}_{0,\delta_\theta}[\tau_{\{-N,N\}}]\underset{N\to\infty}{\longrightarrow} 0$ and
\begin{equation*}
\sup_{x\in \{-N+1,\ldots,-1,1,\ldots, N-1\}}\bP^{\sw}_{x, \delta_\theta}(\tau_{\{-N,0,N\}}>R_N)\leq r_N\;.    
\end{equation*}
By the dynamics of the single-well random walk, for each $i\in\{1,\ldots, N-1\}:$
$$
\bP^{\sw}_{i,\delta_\theta}(\tau_{\{-N,0,N\}}>R_N)=\bP^{\gr}_{i,\theta}(\tau_{\{0,N\}}>R_N)\;,
$$
where, $\bP^{\gr}_{i,\theta}$ denotes the probability of a random walk starting from $i$ with transition probability to the right equal to $\theta$ (we shall denote by $\bE^{\gr}_{i,\theta}$ the corresponding expectation). Similarly, we also have that for each $i\in\{-1,\ldots, -N+1\}:$ 
$$
\bP^{\sw}_{i, \delta_\theta}(\tau_{\{-N,0,N\}}>R_N)=\bP^{\gr}_{i,\theta}(\tau_{\{-N,0\}}>R_N)=\bP^{\gr}_{-i,\theta}(\tau_{\{0,N\}}>R_N)\;.
$$
Hence, by Markov's inequality,
$$
\sup_{x\in \{-N+1,\ldots,-1,1,\ldots, N-1\}}\bP^{\sw}_{x,\delta_\theta}(\tau_{\{-N,0,N\}}>R_N)\leq \frac{1}{R_N}\max_{i\in \{1,\ldots, N-1\}} \bE^{\gr}_{i,\theta}(\tau_{\{0,N\}})\;.
$$
It is well know that for any $i\in \{1,\ldots, N-1\},$
$$
\bE^{\gr}_{i,\theta}[\tau_{\{0,N\}}]=\frac{i}{1-2\theta} -\frac{N}{1-2\theta}\frac{\left(\frac{1-\theta}{\theta}\right)^i -1}{\left(\frac{1-\theta}{\theta}\right)^N -1}\;.
$$
Thus, for all $\theta<1/2$ and $i\in \{1,\ldots, N-1\},$
$$
\bE^{\gr}_{i,\theta}[\tau_{\{0,N\}}]\leq \frac{i}{1-2\theta}\leq \frac{N-1}{1-2\theta}\;, 
$$
so that
$$
\sup_{x\in \{-N+1,\ldots,-1,1,\ldots, N-1\}}\bP^{\sw}_{x,\delta_\theta}(\tau_{\{-N,0,N\}}>R_N)\leq \frac{N-1}{R_N(1-2\theta)}\;.
$$
Therefore, by taking $R_N=((N-1)/(1-2\theta))^{1+\varepsilon}$ for any fixed $\varepsilon>0$  and $r_N=\frac{N-1}{R_N(1-2\theta)}=((1-2\theta)/(N-1))^{\varepsilon}$, and recalling that by Proposition \ref{Prop:exp_of_tau_SW} $\bE^{\sw}_{0,\delta_\theta}[\tau_{\{-N,N\}}]$ grows exponentially in $N$ for any $0<\theta<1/2$, the result follows.

We now prove item (ii).  Let $\{X_n\}_{n\ge0}$ be the location chain with state $\theta$, that is, the Markov chain started at $0$ with transition matrix $Q^{(\theta)}$ defined in Section~\ref{sec:sw}. Notice that $\{|X_n|\}_{n\ge0}$ is a simple random walk reflected at $0$, with probability $1$ to go from $0$ to $1$ and probability $\theta$ to go from $i$ to $i+1$ for any $i\ge1$. Since here we consider $\theta>1/2$, the strong law of large numbers gives
\[
\frac{|X_{n}|}{n}\underset{n\rightarrow\infty}{\rightarrow} 2\theta-1\,,\,\,\text{a.s.}
\]
Since $\tau_N\stackrel{N\rightarrow\infty}{\rightarrow}\infty$ a.s., it follows that , $\frac{|X_{\tau_N}|}{\tau_N}\rightarrow 2\theta-1$ a.s. as well. Moreover, by definition, $|X_{\tau_N}|=N,N\ge1$. On the other hand, since $\theta>1/2$, by Proposition \ref{Prop:exp_of_tau_SW}, we have $\frac{\bE^{\sw}_{0,\delta_\theta}[\tau_{N}]}{N}\rightarrow \frac{1}{2\theta-1}$. Thus
\[
\frac{\tau^N}{\bE^{\sw}_{0,\delta_\theta}[\tau_{N}]}=\frac{\tau^N}{N}\frac{N}{\bE^{\sw}_{0,\delta_\theta}[\tau_{N}]}=\frac{\tau^N}{|X_{\tau_N}|}\frac{N}{\bE^{\sw}_{0,\delta_\theta}[\tau_{N}]}\rightarrow1\,,\,\,\text{a.s.}
\]
\end{proof}

\medskip
\propmetaGR*

\begin{proof}[{\bf Proof of Proposition \ref{thm:metastability_GR}}]
The proof follows the same lines as the proof of Item ii) of Theorem ~\ref{thm:metastabilitySW} with very minor changes.
\end{proof}

\medskip
\thmmetaAW*

\begin{proof}[{\bf Proof of Theorem \ref{th:meta_aw}}]
Given a subset $A\subset \mathcal X_N=[-2N,2N]\cap \mathbb{Z}$, let us denote $\tau_{A}$ the first time the alternating-wells random walk  hits the set $A$. 
With this notation, we have  $\tau_{N}=\tau_{\{-2N,2N\}}$.  Since the proof  relies upon comparison with the single-well, we shall upper indexing $\bP$ and $\bE$ by $\sw$ (single-well) and $\aw$ (alternating-wells) to emphasize which model we are referring to.

According to Theorem~\ref{thm:fernandezetal}, it suffices to find $R_N>0$ and $r_N\in (0,1)$ such that $r_N \underset{N\to\infty}{\longrightarrow} 0$, $R_N/\bE^{\aw}_{0, \delta_\theta}[\tau_{\{-2N,2N\}}]\underset{N\to\infty}{\longrightarrow} 0$ and
\begin{equation*}
\sup_{x\in \{-2N+1,\ldots,-N-1,-N+1,\ldots, 2N-1\}}\bP^{\aw}_{x,\delta_\theta}(\tau_{\{-2N,0,2N\}}>R_N)\leq r_N\;.
\end{equation*}

By Markov's inequality, the left-hand side above is less than or equal to $\frac{1}{R_N} \max_{x\in \mathcal{X}_N}\bE^{\aw}_{x,\delta_\theta}[\tau_{\{-2N,0,2N\}}]$. 
The following claim holds:

{\bf Claim:} 
\begin{align*}
\max_{x\in \mathcal{X}_N}\bE^{\aw}_{x,\delta_\theta}[\tau_{\{-2N,0,2N\}}] = \begin{cases}
\bE^{\aw}_{-N,\delta_\theta}[\tau_{\{-2N,0,2N\}}]\;, & \text{ if $\theta>1/2$,}
\\[5pt]
\bE^{\aw}_{N,\delta_\theta}[\tau_{\{-2N,0,2N\}}]\;, &  \text{ if $\theta<1/2$.}
\end{cases}
\end{align*}

Note that $\bE^{\aw}_{-N,\delta_\theta}[\tau_{\{-2N,0,2N\}}]= \bE^{\aw}_{-N,\delta_\theta}[\tau_{\{-2N,0\}}]$ and  $\bE^{\aw}_{N,\delta_\theta}[\tau_{\{-2N,0,2N\}}]=\bE^{\aw}_{N,\delta_\theta}[\tau_{\{0,2N\}}]$. 
By comparison with the single-well random walk  in Section~\ref{sec:sw}, we obtain that 
\begin{equation}\label{eq:totti}
\begin{split}
\bE^{\aw}_{N,\delta_\theta}[\tau_{\{0,2N\}}]&= \bE^{\sw}_{0,\delta_\theta}[\tau_{\{-N,N\}}]\;,
\\
\bE^{\aw}_{-N,\delta_\theta}[\tau_{\{-2N,0\}}]&= \bE^{\sw}_{0,\delta_{(1-\theta)}}[\tau_{\{-N,N\}}]\;.
\end{split}
\end{equation}
From Proposition~\ref{Prop:exp_of_tau_SW},  we have that 
\[
\begin{cases}
\bE^{\sw}_{0,\delta_\theta}[\tau_{\{-N,N\}}] \leq \frac{N}{2\theta -1}\;, & \text{ if $\theta>1/2$,}
\\[5pt]
\bE^{\sw}_{0,\delta_{(1-\theta)}}[\tau_{\{-N,N\}}] \leq \frac{N}{1-2\theta}\;, & \text{ if $\theta<1/2$.}
\end{cases} 
\]
Thus, for every $\theta\neq 1/2$ we obtain that  
$$
\sup_{x\in \mathcal{X}_N }\bP^{\aw}_{x,\delta_\theta}(\tau_{\{-N,0,N\}}>R_N)\leq \frac{N}{R_N|2\theta-1|}\;.
$$
Therefore, by taking $R_N=(N/|2\theta-1|)^{1+\varepsilon}$ for any fixed $\varepsilon>0$  and $r_N=\frac{N}{R_N|2\theta-1|}=(|2\theta-1|/N)^{\varepsilon}$, and recalling that by Proposition~\ref{Prop:esp__of_tau_alternating_wells} $\bE^{\aw}_{0,\delta_\theta}[\tau_{\{-2N,2N\}}]$ grows exponentially in $N$ for any $\theta\neq 1/2$, the result follows.    

\medskip 
{\em Proof of the Claim:} 
Let us begin observing that for every $x \in [-2N,2N]\cap \mathbb{Z}$ we have
\[
\bE^{\aw}_{x,\delta_\theta}[\tau_{\{-2N,0,2N\}}]=\begin{cases}
\bE^{\aw}_{x,\delta_\theta}[\tau_{\{-2N,0\}}]=\bE^{\sw}_{x+N,\delta_{(1-\theta)}}[\tau_{\{-N,N\}}]\;, & x<0\;,
\\
\bE^{\aw}_{x,\delta_\theta}[\tau_{\{0,2N\}}]=\bE^{\sw}_{x-N,\delta_{\theta}}[\tau_{\{-N,N\}}]\;, & x>0\;,
\\
0\;, &  x=0\;,
\end{cases}
\]
where, we used a comparison with the single-well random walk (see,  Section~\ref{sec:sw}). 
Thus, $\max_{x\in \mathcal{X}_N}\bE^{\aw}_{x,\delta_\theta}[\tau_{\{-2N,0,2N\}}]$ can be written as 
\[
 \max \left\{ \max_{\substack{x\in \mathcal{X}_N \\x<0}}\bE^{\sw}_{x+N,\delta_{(1-\theta)}}[\tau_{\{-N,N\}}], 
\max_{\substack{x\in \mathcal{X}_N \\x>0}}\bE^{\sw}_{x-N,\delta_{\theta}}[\tau_{\{-N,N\}}] 
\right\}\;.
\]
Hence, to prove the claim it suffices to show that for every $\theta \in (0,1]$
\begin{align}\label{eq:maxsw}
\max_{i\in \{-N+1,\ldots,-1,0,+1,\ldots, N-1\}}\bE^{\sw}_{i,\delta_{\theta}}[\tau_{\{-N,N\}}]= \bE^{\sw}_{0,\delta_{\theta}}[\tau_{\{-N,N\}}] \;.
\end{align}
As a matter of fact, if \eqref{eq:maxsw} holds,  we obtain $$\max_{x\in \mathcal{X}_N}\bE^{\aw}_{x,\delta_\theta}[\tau_{\{-2N,0,2N\}}] = \max \left\{ \bE^{\sw}_{0,\delta_{(1-\theta)}}[\tau_{\{-N,N\}}], \bE^{\sw}_{0,\delta_{\theta}}[\tau_{\{-N,N\}}] 
\right\}$$ and the claim follows from Proposition~\ref{Prop:exp_of_tau_SW} and \eqref{eq:totti}. To show that  \eqref{eq:maxsw} holds, note that, for every $\theta \in (0,1]$ and  for every $x \in \{1,\ldots, N-1\}$,  it holds that $\bE^{\sw}_{x,\delta_{\theta}}[\tau_{\{-N,N\}}]=\bE^{\sw}_{-x,\delta_{\theta}}[\tau_{\{-N,N\}}]$ (due to  the symmetry of the single-well). Therefore, showing  \eqref{eq:maxsw} reduces to show $
\max_{i\in \{0,1,\ldots, N-1\}}\bE^{\sw}_{i,\delta_{\theta}}[\tau_{\{-N,N\}}]= \bE^{\sw}_{0,\delta_{\theta}}[\tau_{\{-N,N\}}]$. The latter condition follows from noticing that $\bE^{\sw}_{x,\delta_{\theta}}[\tau_{\{-N,N\}}]\leq \bE^{\sw}_{x-1,\delta_{\theta}}[\tau_{\{-N,N\}}]$, for every $x\in \{1, \ldots, N-1\}$. In fact, $\bE^{\sw}_{0,\delta_{\theta}}[\tau_{\{-N,N\}}]= 1 + \bE^{\sw}_{1,\delta_{\theta}}[\tau_{\{-N,N\}}]$ and thus $\bE^{\sw}_{1,\delta_{\theta}}[\tau_{\{-N,N\}}]\leq \bE^{\sw}_{0,\delta_{\theta}}[\tau_{\{-N,N\}}]$. Then, recursively applying   that, for every $x\in \{1, \ldots, N-1\}$ it holds that 
\[
\bE^{\sw}_{x,\delta_{\theta}}[\tau_{\{-N,N\}}]= 1 + (1-\theta)\bE^{\sw}_{x-1,\delta_{\theta}}[\tau_{\{-N,N\}}] + \theta \bE^{\sw}_{x+1,\delta_{\theta}}[\tau_{\{-N,N\}}]\;,
\]
and   $\bE^{\sw}_{x,\delta_{\theta}}[\tau_{\{-N,N\}}]\leq \bE^{\sw}_{x-1,\delta_{\theta}}[\tau_{\{-N,N\}}]$, the claim follows. 
\end{proof}

\section{Auxiliary Lemmas}\label{app:lemmas}

\begin{lemma}
\label{Lemma:SW_converges_uniformly}
Let $h_N(\theta)=N^{-1}\log m_N(\theta)$ be the Single Well potential with parameter $N\geq 1$ at the value $\theta\in (0,1]$, where $m_N(\theta)$ is given in Equation~\eqref{Expression_for_exp_of_tau_SW}, and $h(\theta)$ be the function defined in Definition~\eqref{def:limit_potential_SW}. For any $a\in (0,1)$, it holds that
$$
\sup_{\theta\in [a,1]}|h_N(\theta)-h(\theta)|\underset{N\to\infty}{\longrightarrow}0\;.
$$
\end{lemma}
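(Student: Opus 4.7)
The plan is to invoke the classical Pólya-type theorem on uniform convergence: if a sequence of monotone functions on a compact interval converges pointwise to a continuous function, then the convergence is automatically uniform. The paper has already noted (in the paragraph preceding the Lemma) that each $h_N$ is continuous and monotonically decreasing in $\theta$, so I would just need to verify pointwise convergence on $[a,1]$ and continuity of $h$, and then quote Pólya's theorem.

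\textbf{Step 1 (pointwise convergence on $[a,1]$).} Using the explicit formula \eqref{Expression_for_exp_of_tau_SW}, I would treat three cases. If $\theta<1/2$, then $(1-\theta)/\theta>1$, so the exponential term dominates and
$$
m_N(\theta)=\frac{2\theta(1-\theta)}{(1-2\theta)^2}\left(\frac{1-\theta}{\theta}\right)^{N}\!\bigl(1+o(1)\bigr),
$$
hence $h_N(\theta)=\log((1-\theta)/\theta)+O(N^{-1})\to h(\theta)$. If $\theta>1/2$, then $(1-\theta)/\theta<1$, the exponential term vanishes, and $m_N(\theta)\sim N/(2\theta-1)$, so $h_N(\theta)=N^{-1}\log(N/(2\theta-1))+o(N^{-1})\to 0=h(\theta)$. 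Finally $h_N(1/2)=2N^{-1}\log N\to 0=h(1/2)$.

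\textbf{Step 2 (continuity of $h$).} Both branches in \eqref{def:limit_potential_SW} are elementarily continuous on their respective closed subintervals of $[a,1]$, and they match at $\theta=1/2$ since $\log((1-\theta)/\theta)|_{\theta=1/2}=0=h(1/2)$. Hence $h\in C([a,1])$.

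\textbf{Step 3 (Pólya's theorem).} Each $h_N$ is monotonically decreasing in $\theta$ on $[a,1]$ and converges pointwise to the continuous function $h$ on the compact interval $[a,1]$. By Pólya's theorem the convergence is uniform, which is the desired conclusion.

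The main obstacle I would anticipate is precisely what Pólya's theorem sidesteps: a direct $\varepsilon$-$\delta$ control near $\theta=1/2$ is awkward because the formula for $m_N$ has a removable singularity there and the two terms in the second branch of \eqref{Expression_for_exp_of_tau_SW} each blow up individually. Monotonicity resolves this cleanly: for $\theta\in[1/2-\delta,1/2+\delta]$ one has the squeeze $h_N(1/2+\delta)\le h_N(\theta)\le h_N(1/2-\delta)$, and together with the continuity of $h$ at $1/2$ (so that $h(1/2\pm\delta)\to 0$ as $\delta\to 0$) this gives uniform control in the problematic window without any patching of asymptotic expansions.
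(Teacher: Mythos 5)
Your proposal is correct and follows essentially the same route as the paper: the paper's proof is precisely the standard grid-plus-monotonicity argument that proves Pólya's theorem in this instance (uniform continuity of $h$ on $[a,1]$, pointwise convergence at finitely many grid points, and the squeeze $h_N(\theta_{i+1})\le h_N(\theta)\le h_N(\theta_i)$), whereas you simply quote the theorem by name. Both arguments rest on the same unproved-but-asserted fact from the discussion below Proposition~\ref{Prop:exp_of_tau_SW} that each $h_N$ is monotonically decreasing, so no new gap is introduced.
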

\begin{proof}[Proof of Lemma~\ref{Lemma:SW_converges_uniformly}]
Fix $\varepsilon>0$. The continuity of the function $h$ implies that $h$ is uniformly continuous on $[a,1]$. From this it follows that there exists $\delta=\delta(\varepsilon)$ such that if $\theta,\theta'\in [a,1]$ satisfies $|\theta-\theta'|<\delta$, then $|h(\theta)-h(\theta')|<\varepsilon$. For fixed $\delta>0$, define $M=\lfloor (1-a)/\delta \rfloor$, $\theta_i=a+i\delta$ for $0\leq i\leq M$ and $\theta_{M+1}=1\vee \theta_M$. 

We recall that $h_N$ converges pointwise to $h$ and that all these functions are monotonically decreasing (see the discussion below Proposition \ref{Prop:exp_of_tau_SW}). It follows from the convergence that there exists $N_0=N_0(\delta)$ such that for all $N\geq N_0$ and $0\leq i\leq M+1$,
$$
|h_N(\theta_i)-h(\theta_i)|<\varepsilon\;.
$$
Note that by construction $|h(\theta_i)-h(\theta)|<\varepsilon$ and $|h(\theta_{i+1})-h(\theta)|<\varepsilon$ for all $\theta_i< \theta<\theta_{i+1}$. On the other hand, using now the monotonicity of both $h_N$ and $h$, we deduce that for all $N\geq N_0$ and $\theta_i< \theta<\theta_{i+1}$,
\begin{equation}
\label{Lemma_1:ineq_1}
h_N(\theta)\leq h_N(\theta_i)\leq {h}(\theta_i)+\varepsilon\leq {h}(\theta)+2\varepsilon \;. 
\end{equation}
Similarly, we also have
\begin{equation}
\label{Lemma_1:ineq_2}
h_N(\theta)\geq h_N(\theta_{i+1})\geq {h}(\theta_{i+1})-\varepsilon\geq {h}(\theta)-2\varepsilon\;. 
\end{equation}
Thus, by combining   \eqref{Lemma_1:ineq_1} and \eqref{Lemma_1:ineq_2} we conclude that for all $N\geq N_0$ and all $0\leq i\leq M+1$, 
$$
\sup_{\theta\in [\theta_i,\theta_{i+1}]}|h_N(\theta)-h(\theta)|<2\varepsilon\;,
$$
implying the lemma.
\end{proof}

\begin{lemma}
\label{Lemma:AW_converges_uniformly}
Let $h_N(\theta)=N^{-1}\log m_N(\theta)$ be the Alternating Wells potential with parameter $N\geq 1$ at the value $\theta\in (0,1)$, where $m_N(\theta)$ is given in Proposition~\ref{Prop:esp__of_tau_alternating_wells}, and $h(\theta)$ be the function defined in  \eqref{def:limit_potential_AW}. For any $0<a<1/2<b<1$, it holds that
$$
\sup_{\theta\in [a,b]}|h_N(\theta)-h(\theta)|\underset{N\to \infty}{\longrightarrow} 0\;.
$$
\end{lemma}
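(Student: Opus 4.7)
}
The strategy is to reduce this to the argument already carried out in Lemma~\ref{Lemma:SW_converges_uniformly}, which is essentially a Dini-type upgrade of pointwise convergence to uniform convergence exploiting monotonicity. The obstruction is that on $[a,b]$ the functions $h_N$ and $h$ are not globally monotone: as recalled just after Proposition~\ref{Prop:esp__of_tau_alternating_wells}, both $h_N$ and the limit $h(\theta)=|\log((1-\theta)/\theta)|$ are decreasing on $(0,1/2]$ and increasing on $[1/2,1)$, with a common minimum at $\theta=1/2$. Thus the plan is to split the interval $[a,b]=[a,1/2]\cup[1/2,b]$ and run the Dini-type argument on each piece separately, then combine.

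\paragraph{Step 1: pointwise convergence and continuity.}
First I would record the ingredients that the Lemma~\ref{Lemma:SW_converges_uniformly} proof uses, and verify them here. From the closed form of $m_N(\theta)$ in Proposition~\ref{Prop:esp__of_tau_alternating_wells} one checks (as the authors state) that $h_N(\theta)\to h(\theta)$ pointwise on $(0,1)$, and that $h$ is continuous on $(0,1)$ (continuity at $\theta=1/2$ is immediate since $\log((1-\theta)/\theta)\to 0$ from both sides). Since $[a,1/2]$ and $[1/2,b]$ are compact, $h$ is uniformly continuous on each.

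\paragraph{Step 2: Dini-type argument on each half.}
Fix $\varepsilon>0$, and work on the right half $[1/2,b]$ (the left half is symmetric). By uniform continuity of $h$ on $[1/2,b]$, pick $\delta>0$ such that $|h(\theta)-h(\theta')|<\varepsilon$ whenever $|\theta-\theta'|<\delta$. Set $M=\lfloor (b-1/2)/\delta\rfloor$ and $\theta_i=1/2+i\delta$ for $0\le i\le M$, with $\theta_{M+1}=b\vee \theta_M$. By pointwise convergence at the finite set $\{\theta_0,\ldots,\theta_{M+1}\}$, there exists $N_0$ such that $|h_N(\theta_i)-h(\theta_i)|<\varepsilon$ for all $N\ge N_0$ and all $0\le i\le M+1$. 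For $\theta\in[\theta_i,\theta_{i+1}]$ the \emph{monotone increasing} character of both $h_N$ and $h$ on $[1/2,b]$ gives, exactly as in \eqref{Lemma_1:ineq_1}--\eqref{Lemma_1:ineq_2} with the inequalities reversed,
\begin{align*}
h_N(\theta)\le h_N(\theta_{i+1})\le h(\theta_{i+1})+\varepsilon \le h(\theta)+2\varepsilon,\\
h_N(\theta)\ge h_N(\theta_i)\ge h(\theta_i)-\varepsilon\ge h(\theta)-2\varepsilon.
\end{align*}
Therefore $\sup_{\theta\in[1/2,b]}|h_N(\theta)-h(\theta)|\le 2\varepsilon$ for all $N\ge N_0$.

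\paragraph{Step 3: combine.}
Apply verbatim the argument of Lemma~\ref{Lemma:SW_converges_uniformly} to $[a,1/2]$, where now both $h_N$ and $h$ are decreasing, to obtain the analogous bound $\sup_{\theta\in[a,1/2]}|h_N(\theta)-h(\theta)|\le 2\varepsilon$ for all $N$ sufficiently large. Taking the larger of the two thresholds for $N$ yields $\sup_{\theta\in[a,b]}|h_N(\theta)-h(\theta)|\le 2\varepsilon$, and since $\varepsilon>0$ was arbitrary the uniform convergence follows. I do not anticipate a genuine obstacle here; the only point that requires care is the behaviour at the kink $\theta=1/2$, which is harmless because $h$ is continuous there and the monotonicity allows the two one-sided estimates to be patched together.
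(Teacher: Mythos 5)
Your proposal is correct and follows essentially the same route as the paper's own proof: split $[a,b]$ at $\theta=1/2$ and run the monotone Dini-type argument of Lemma~\ref{Lemma:SW_converges_uniformly} on each half, with the roles of $\theta_i$ and $\theta_{i+1}$ swapped on the increasing side. Your version simply spells out the details that the paper leaves as a reference to the single-well proof.
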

\begin{proof}[Proof of Lemma~\ref{Lemma:AW_converges_uniformly}]
First, apply the exact same proof of Lemma~\ref{Lemma:SW_converges_uniformly} to show the uniform convergence holds on $[a,1/2]$. 
To show the uniform convergence on $[1/2,b]$, one can proceed as in the proof of Lemma~\ref{Lemma:SW_converges_uniformly} but now using $\theta_{i+1}$ instead of $\theta_{i}$ in \eqref{Lemma_1:ineq_1} and $\theta_i$ instead of $\theta_{i+1}$ in \eqref{Lemma_1:ineq_2}.  
\end{proof}

\bibliography{ref}

\end{document}